\documentclass[12pt]{amsart}

\usepackage[dvips]{graphics}

    \usepackage{amsmath,amsfonts,eucal,xy, enumerate, amscd, amssymb}
\voffset -1cm \hoffset -0.5in \textheight 8.8in \textwidth 6.2in

\xyoption{all}


     \newtheorem{theorem}{Theorem}[section]
     \newtheorem{proposition}[theorem]{Proposition}
     
     \newtheorem{corollary}[theorem]{Corollary}
     \newtheorem{lemma}[theorem]{Lemma}

%
%
 \theoremstyle{definition}

     \newtheorem{examples}[theorem]{Examples}
     \newtheorem{definition}[theorem]{Definition}

%
%
     \newcommand{\In}{\subseteq} 
\newcommand{\script}[1]{\mathcal #1}

\renewcommand{\Bbb}[1]{\mathbb #1}

\newcommand{\bk}{{\Bbb K}}

\newcommand{\cM}{{\mathcal M}}
\newcommand{\cP}{{\mathcal P}}

\newcommand{\fin}{{\operatorname{fin}}}

\newcommand{\G}{\Gamma}

\renewcommand{\frak}{\mathfrak}
\newcommand{\gA}{{\frak A}}
\newcommand{\gB}{{\frak B}}
\newcommand{\gC}{{\frak C}}
\newcommand{\gF}{{\frak F}}
\newcommand{\gK}{{\frak K}}
\newcommand{\gL}{{\frak L}}

\newcommand{\gN}{{\frak N}}
\newcommand{\gP}{{\frak P}}
\newcommand{\gW}{{\frak W}}
\newcommand{\inj}{{\operatorname{inj}}}
\newcommand{\lam}{\lambda}

\newcommand{\ori}{{\operatorname{ori}}}

\newcommand{\po}{^\circ}

\newcommand{\s}{\sigma}
\newcommand{\sB}{{\script B}}

\newcommand{\sC}{{\script C}}

\newcommand{\sS}{{\script S}}

\newcommand{\sM}{{\script M}}
\newcommand{\sO}{{\script O}}
\newcommand{\sT}{{\script T}}

\newcommand{\sur}{{\operatorname{sur}}}

\newcommand{\von}{{\operatorname{von}}}

     \newcommand{\ie}{{i.e., }}
     \newcommand{\eg}{{e.g.\  }}

\begin{document}

\parskip=0.5\baselineskip
\baselineskip=1.44\baselineskip

\title
{Operator Ideals arising from Generating Sequences}
\date{January 11, 2011; for the Proceedings of ICA 2010 (World Scientific).
}
\author{Ngai-Ching Wong}

\address{
Department of Applied Mathematics, National Sun Yat-sen
  University, and National Center for Theoretical Sciences, Kaohsiung, 80424, Taiwan, R.O.C.}

\email{wong@math.nsysu.edu.tw}

\dedicatory{Dedicated to Professor Kar-Ping Shum in honor of his seventieth birthday}

\thanks{Partially supported  by a
Taiwan National Science Council grant 99-2115-M-110-007-MY3}

\keywords{operator ideals, generating sequences, generating bornologies}

\subjclass[2000]{47L20, 47B10 46A11, 46A17}

\begin{abstract}
 In this note, we will discuss how to relate an operator ideal on Banach spaces to the sequential
 structures it defines.  Concrete examples of ideals of compact, weakly compact, completely continuous,
 Banach-Saks and weakly Banach-Saks operators will be demonstrated.
 \end{abstract}

\maketitle

\section{Introduction}

Let $T: E\to F$ be  a linear operator between Banach spaces.
Let $U_E, U_F$ be the closed unit balls of $E,F$, respectively.
Note that closed unit balls serve simultaneously the basic model for
open sets and bounded sets in Banach spaces.
The usual way to describe $T$ is to state
either  the bornological property, via $TU_E$, or  the topological
property, via $T^{-1}U_F$, of $T$.
An other way  classifying $T$ is through the sequential structures $T$ preserves.
Here are some well-known examples.

\begin{examples}\label{eg:equiv}
\begin{enumerate}
    \item \quad\ $T$ is {\em bounded} (\ie $TU_E$ is a bounded subset of
$F$)
\\ $\Leftrightarrow$ $T$ is {\em continuous} (\ie $T^{-1}U_F$ is a
0-neighborhood of $E$ in the norm topology)
\\ $\Leftrightarrow$ $T$ is \emph{sequentially bounded} ( i.e.,
$T$ sends bounded sequences to bounded sequences);

    \item \quad\ $T$ is {\em of finite
rank} (\ie $TU_E$ spans a finite dimensional subspace of $F$) \\
$\Leftrightarrow$ $T$ is {\em
weak-norm continuous} (\ie $T^{-1}U_F$ is a 0-neighborhood of $E$ in
the weak topology)
\\ $\Leftrightarrow$ $T$
sends bounded sequences to sequences spanning finite dimensional subspaces of $F$;

    \item \quad\ $T$ is {\em compact} (\ie $TU_E$ is totally
bounded in
 $F$) \\  $\Leftrightarrow$ $T$ is continuous in the topology of uniform
 convergence on norm compact subsets of $E'$
 (\ie $T^{-1}U_F \supseteq K^\circ$, the polar of a norm compact  subset $K$ of
   the dual space $E'$ of $E$)
   \\ $\Leftrightarrow$ $T$ is \emph{sequentially compact} (i.e.,
   $T$ sends bounded sequences to sequences with norm convergent subsequences); and

    \item \quad\ $T$ is {\em weakly compact} (\ie $TU_E$ is relatively weakly compact in
 $F$) \\  $\Leftrightarrow$ $T$ is continuous in the topology of uniform
 convergence on weakly compact subsets of $E'$
 (\ie $T^{-1}U_F \supseteq K^\circ$, the polar of a weakly compact  subset $K$ of
   $E'$)
   \\ $\Leftrightarrow$ $T$ is \emph{sequentially weakly compact} (i.e.,
   $T$ sends bounded sequences to sequences with weakly convergent subsequences).
\end{enumerate}
\end{examples}

In \cite{WW88,WW93, W94}, we  investigate the duality of the topological and bornological
characters of operators demonstrated in the above examples, and applied them in the study of operator ideal
theory in the sense of Pietsch \cite{Pie80}.
In \cite{W08}, we use these concepts in classifying locally convex spaces.
They are also used by other authors in operator algebra theory in \cite{West, CanWest}.

After giving a brief account of the equivalence among the notions of operator ideals, generating
topologies, and generating bornologies developed in \cite{WW88, WW93, W94} in Section 2, we
shall explore into the other option of using sequential structures in Section 3.
The theory is initialed by  Stephani \cite{Step80, Step83}.  We provide a variance here.  In particular, we will show that the notions of operator ideals, generating bornologies and
generating sequences are equivalent in the context of Banach spaces.  However, we note that the sequential
methodology does not seem to be appropriate in the context of locally convex spaces, as successfully as in
\cite{WW88, W08}.  Some examples of popular operator ideals are treated in our new ways as demonstrations in
Section 4.

The author would like to take this opportunity to thank Professor Kar-Ping Shum.  He
has learned a lot from Professor Shum
since he was a student in the Chinese University of Hong Kong in 1980's.
With about 300 publications, Professor Shum has served as a good model for
the author and other fellows since then.
The current work is based  partially on
the thesis of the author finished when he was studying with Professor
Shum.

\section{The triangle of operators, topologies and bornologies}

Let $X$ be a vector space over $\mathbb{K}=\mathbb{R}$ or $\mathbb C$.
Following Hogbe-Nlend \cite{HN77}, by
a {\it vector bornology\/} on $X$ we mean a family $\sB$ of subsets
of $X$ satisfies the following conditions:
\begin{enumerate}
\item[(VB$_1$)]$X=\cup \sB$;
\item[(VB$_2$)]if $B\in \sB$ and $A \subseteq B$ then $A\in \sB$;
\item[(VB$_3$)]$B_{1}+B_{2}\in \sB$ whenever $B_{1},B_{2}\in \sB$;
\item[(VB$_4$)]$\lam B\in \sB$ whenever $\lam \in \bk$ and $B \in \sB$;
\item[(VB$_5$)]the circle hull $\text{ch}\, B$ of any $B$ in $\sB$ belongs to $\sB$.
\end{enumerate}

Elements in $\sB$ are called $\sB$-bounded sets in $X$.
A vector bornology $\sB$ on $X$ is called a {\it convex bornology\/}
if $\Gamma B\in \sB$  for all $B$ in $\sB$,
where $\Gamma B$ is the absolutely convex hull of $B$. The pair
$(X,\sB)$ is called a {\it convex bornological space\/} which is
denoted by
$X^{\sB}$.

Let $(X,\sT)$ be a locally convex space. For any subset $B$ of $X$,
the {\it $\s$-disked hull\/} of $B$ is defined to be

\begin{center}
$\G_{\s}B=\{\sum^{\infty}_{n=1} \lam_{n}b_{n}:\sum^{\infty}_{n=1}
|\lam_{n}| \leq 1, \lam_{n}\in \bk, b_{n}\in
B, n=1,2,\ldots\}$.
\end{center}
A set $B$ in $X$ is said to be {\it $\s$-disked\/} if $B=\G_{\s}B$.
An absolutely convex, bounded subset $B$ of $X$ is said to be {\it
infracomplete} (or a {\it Banach disk\/}) if the normed space
$(X(B),r_B)$ is complete, where
$$
X(B)=\bigcup_{n\geq 1}nB \quad\text{and}\quad r_B(x):=\inf\{\lambda >0 : x\in \lambda B\}
$$
is the gauge of $B$ defined on $X(B)$.

\begin{lemma}[{\cite{WW88}}]
Let $(X,\script P)$ be a locally convex space and B an absolutely
convex bounded subset of X.
\begin{enumerate}
\item[(a)]If B is $\s$-disked then B is infracomplete.
\item[(b)]If B is infracomplete and closed then B is $\s$-disked.
\end{enumerate}
\end{lemma}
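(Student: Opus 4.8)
My plan is to prove the two implications separately, in each case exploiting the comparison between the two topologies carried by $X(B)$. First I would record the two facts I will lean on. On $X(B)$ the norm $r_B$ induces a topology finer than the one inherited from $\script P$; conversely, because $B$ is $\script P$-bounded, $r_B$-null sequences are $\script P$-null: given a $\script P$-neighbourhood $U$ of $0$, boundedness yields $B\sset\lam U$ for some $\lam>0$, so $r_B(z_k)<\e_k$ forces $z_k\in\e_k B\sset\e_k\lam U$. I would also note that $r_B$ is a genuine norm on $X(B)$, since $B$ bounded and $X$ Hausdorff give $\bigcap_{n\ge 1}\tfrac1n B=\{0\}$.

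For part (a) I would assume $B=\G_\s B$ and reduce completeness of $(X(B),r_B)$ to showing that every absolutely $r_B$-summable sequence is $r_B$-summable. Starting from $\sum_k r_B(y_k)<\infty$, I would set $\mu_k:=r_B(y_k)$, pick $\mu_k'>\mu_k$ with $\sum_k\mu_k'=:M'<\infty$, and put $b_k:=y_k/\mu_k'$; then $r_B(b_k)<1$ forces $b_k\in B$ by absolute convexity, and $y_k=\mu_k' b_k$. For each $N$ the normalised tail $\sum_{k>N}(\mu_k'/R_N)b_k$, with $R_N:=\sum_{k>N}\mu_k'$, is an honest $\s$-combination of points of $B$, so the hypothesis $B=\G_\s B$ makes it converge in $X$ to a point of $B$. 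This would yield convergence of $\sum_k y_k$ in $X$ to some $s\in M'B\sset X(B)$, with tail past $N$ lying in $R_N B$, whence $r_B\bigl(s-\sum_{k\le N}y_k\bigr)\le R_N\to 0$; thus the series is $r_B$-summable and $(X(B),r_B)$ is complete.

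For part (b) I would assume $B$ infracomplete and closed; since $B\sset\G_\s B$ is immediate, it remains to prove $\G_\s B\sset B$ (which simultaneously certifies that every $\s$-combination of points of $B$ converges). Given $x=\sum_n\lam_n b_n$ with $\sum_n|\lam_n|\le 1$ and $b_n\in B$, I would set $s_N:=\sum_{n\le N}\lam_n b_n$. Absolute convexity puts each $s_N\in B$, and for $M>N$ one has $s_M-s_N\in(\sum_{N<n\le M}|\lam_n|)B\sset R_N B$ with $R_N:=\sum_{n>N}|\lam_n|\to 0$, so $r_B(s_M-s_N)\le R_N$ and $(s_N)$ is $r_B$-Cauchy. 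Infracompleteness then supplies a limit $y\in X(B)$ with $r_B(s_N-y)\to 0$, and by the comparison above $s_N\to y$ also in $\script P$, so the series converges in $X$ to $y=x$. Finally each $s_N\in B$ and $B$ is $\script P$-closed, so $x=y\in B$.

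The main obstacle in both parts is the passage between the coarse topology $\script P$ and the fine norm $r_B$ on $X(B)$. In (a) the real work is manufacturing genuine $\s$-combinations from an abstract absolutely $r_B$-summable sequence: the rescaling $y_k=\mu_k' b_k$ and the normalisation of tails by $R_N$ are precisely what convert the $\s$-disked containment $\sum_{k>N}(\mu_k'/R_N)b_k\in B$ into the quantitative estimate $r_B\bigl(s-\sum_{k\le N}y_k\bigr)\le R_N$. In (b) the delicate point is that infracompleteness only produces an $r_B$-limit $y$ a priori unrelated to the $\script P$-sum $x$; identifying $y=x$ needs boundedness of $B$ (to push $r_B$-convergence down to $\script P$-convergence) together with Hausdorffness of $X$, after which closedness of $B$ closes the argument.
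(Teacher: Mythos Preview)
The paper does not prove this lemma; it is stated with a citation to \cite{WW88} and no argument is supplied, so there is nothing in the paper itself to compare your attempt against. Your proof is correct and follows the standard route: in (a) you reduce completeness of $(X(B),r_B)$ to the criterion that absolutely $r_B$-summable series are $r_B$-summable, rescale to write $y_k=\mu_k' b_k$ with $b_k\in B$, and use that each normalised tail $\sum_{k>N}(\mu_k'/R_N)b_k$ is a $\sigma$-combination landing in $B$ to obtain $s-s_N\in R_N B$; in (b) you show the partial sums of a $\sigma$-combination are $r_B$-Cauchy, invoke infracompleteness for the $r_B$-limit, transfer convergence to $\mathcal P$ via boundedness of $B$, identify the limit with the given sum by Hausdorffness, and conclude membership in $B$ from closedness. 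The one point worth flagging is that in (a) you are implicitly reading ``$B=\Gamma_\sigma B$'' in the strong sense that every series $\sum\lambda_n b_n$ with $\sum|\lambda_n|\le 1$ and $b_n\in B$ actually converges in $X$; this is the intended reading (and exactly what your proof of (b) establishes in the converse direction), but it would do no harm to say so explicitly.
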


Let $X,Y$ be locally convex spaces.
Denote by $\sigma(X,X')$  the weak topology
of $X$ with respect to its dual space $X'$, while  $\cP_\ori(X)$ is the original topology of $X$.
We employ the notion $\cM_\fin(Y)$ for the {\em finite dimensional bornology}
of $Y$ which has a basis consisting of all convex hulls
of finite sets.
On the other hand, $\cM_\von(Y)$ is used for the {\em von Neumann bornology} of $Y$
which consists of all $\cP_\ori(Y)$-bounded subsets of $Y$.
Ordering of topologies and
bornologies are induced by set-theoretical inclusion, as usual.  Moreover, we write briefly
$X_{\cP}$ for a vector space $X$ equipped with a locally convex
topology $\cP$ and
$Y^{\cM}$ for a vector space $Y$ equipped with a convex vector bornology
$\cM$.

 Let $X, Y$ be locally convex  spaces. We denote by $\gL(X,Y)$ and
$L^{\times}(X,Y)$ the collection of all linear operators from $X$ into $Y$
which are continuous and (locally) bounded (\ie sending bounded sets to bounded
sets), respectively.

\begin{definition}[see \cite{W08}]
Let $\sC$ be a subcategory of locally convex spaces.
\begin{enumerate}
    \item ({\bf ``Operators''}) A family $\gA = \{\gA(X,Y) : X,Y \in \sC\}$ of algebras of operators associated to each pair of spaces $X$ and $Y$ in $\sC$ is called an {\em operator ideal}
if
    \begin{description}
        \item[OI$_1$]  $\gA(X,Y)$ is a nonzero vector subspace of $\gL(X,Y)$ for all $X$, $Y$ in $\sC$; and
        \item[OI$_2$]  $RTS \in \gA(X_0,Y_0)$ whenever $R \in \gL(Y,Y_0)$,
$T\in\gA(X,Y)$ and $S \in \gL(X_0,X)$ for any $X_0$, $X$, $Y$ and $Y_0$ in $\sC$.
    \end{description}

    \item ({\bf ``Topologies''}) A family $\cP =\{\cP(X) : X\in \sC\}$ of locally convex   topologies
associated to each space $X$ in $\sC$ is called a {\em generating topology} if
    \begin{description}
    \item[GT$_1$] $\sigma(X,X') \In \cP(X) \In \cP_\ori(X)$ for all $X$ in $\sC$; and
    \item[GT$_2$] $\gL(X,Y) \In \gL(X_\cP,Y_\cP)$ for all $X$ and $Y$ in $\sC$.
    \end{description}

    \item ({\bf ``Bornologies''}) A family $\cM=\{\cM(Y): Y \in \sC\}$ of convex vector
    bornologies associated to each space $Y$ in $\sC$ is called a {\em generating bornology} if
    \begin{description}
    \item[GB$_1$]  $\cM_\fin(Y) \In \cM(Y) \In \cM_\von(Y)$ for all $Y$ in $\sC$; and
    \item[GB$_2$]  $\gL(X,Y) \In L^\times(X^\cM,Y^\cM)$ for all $X$ and $Y$ in $\sC$.
    \end{description}
\end{enumerate}
\end{definition}

\begin{definition}\label{defn:vertices}
Let $\gA$ be an operator ideal, $\cP$ a generating topology and $\cM$  a generating bornology on $\sC$.
\begin{enumerate}
    \item ({\bf ``Operators'' $\rightarrow$ ``Topologies''})
For each $X_0$ in $\sC$, the $\gA$--{\em topology} of $X_0$, denoted by $\sT(\gA)(X_0)$, is the projective topology of $X_0$ with respect to the family
$$
\{T \in \gA(X_0,Y):Y\in\sC\}.
$$
  In other words, a seminorm $p$ of $X_0$ is $\sT(\gA)(X_0)$--continuous if and only if there is a $T$ in $\gA(X_0,Y)$ for some $Y$ in
$\sC$ and a continuous seminorm $q$ of $Y$ such that
$$
p(x) \leq q(Tx), \quad\forall x \in X_0.
$$
In this case, we call $p$ an $\gA$--{\em seminorm} of $X_0$.

    \item ({\bf ``Operators'' $\rightarrow$ ``Bornologies''})
For each $Y_0$ in $\sC$, the $\gA$--{\em bornology} of $Y_0$, denoted by $\sB(\gA)(Y_0)$, is the inductive bornology of $Y_0$ with respect to the family
$$
\{T \in \gA(X,Y_0):X\in\sC\}.
$$
  In other words, a subset $B$ of $Y_0$ is $\sB(\gA)(Y_0)$--bounded if and only if there is a $T$ in $\gA(X,Y_0)$ for some $X$ in
$\sC$ and a topologically bounded subset $A$ of $X$ such that
$$
B \In TA.
$$
In this case, we call $B$ an $\gA$--{\em bounded subset} of $Y_0$.

    \item ({\bf ``Topologies'' $\rightarrow$ ``Operators''})
For $X$, $Y$ in $\sC$, let
$$
\sO(\cP)(X,Y) = \gL(X_\cP,Y)
$$
be the vector space of all continuous operators from $X$ into $Y$ which is still continuous with
respect to the $\cP(X)$--topology.

    \item ({\bf ``Bornologies'' $\rightarrow$ ``Operators''})
For $X$, $Y$ in $\sC$, let
$$
\sO(\cM)(X,Y) = \gL(X,Y)\cap L^\times(X,Y^\cM)
$$
be the vector space of all continuous operators from $X$ into $Y$ which send bounded
sets to $\cM(Y)$--bounded sets.

    \item ({\bf ``Topologies'' $\leftrightarrow$ ``Bornologies''})
For $X$, $Y$ in $\sC$, the $\cP^\circ(Y)$--{\em bornology} of $Y$ (resp.\ $\cM^\circ(X)$--{\em topology} of $X$) is
defined to be the bornology (resp.\ topology) polar to
$\cP(X)$ (resp.\ $\cM(Y)$).  More precisely,
\begin{itemize}
    \item a bounded subset $A$ of $Y$ is $\cP^\circ(Y)$--bounded if and only if  its   polar
$A\po$ is a $\cP(Y'_\beta)$--neighborhood of zero; and
    \item a neighborhood $V$ of zero of $X$ is a $\cM^\circ(X)$--neighborhood of zero if and only if
$V^\circ$ is $\cM(X'_\beta)$--bounded.
\end{itemize}
\end{enumerate}
\end{definition}

Here are two examples: the ideals $\gK$ of compact operators and $\gP$ of
absolutely summing operators (see \eg\cite{Pie80}), the generating systems $\cP_{pc}$ of precompact
topologies (see \eg\cite{Ran72}) and $\cP_{pn}$ of prenuclear topologies (see \eg\cite[p.\ 90]{Sch71}),
and the generating systems $\cM_{pc}$ of precompact bornologies
and $\cM_{pn}$ of prenuclear bornologies (see \eg\cite{HN81}),
respectively.

\begin{definition}
An operator ideal $\gA$ on Banach spaces is said to be
\begin{enumerate}
    \item \emph{injective} if $S\in\gA(E,F_0)$ infers
$T\in \gA(E,F)$, whenever  $T\in \gL(E,F)$ and
$\|Tx\|\leq \|Sx\|, \forall x\in E$;
    \item \emph{surjective} if $S\in\gA(E_0, F)$ infers
$T\in \gA(E,F)$, whenever $T\in \gL(E,F)$ and
$TU_E\subseteq SU_{E_0}$.
\end{enumerate}
\end{definition}

The \emph{injective hull} $\gA^\inj$ and the \emph{surjective hull} $\gA^\sur$,
of $\gA$ is the
intersection of all injective and surjective operator ideals
containing $\gA$, respectively.
For example, $\gK=\gK^\inj=\gK^\sur$ and $\gP=\gP^\inj \varsubsetneq\gP^\sur$ (see, e.g., \cite{Pie80}).

In the following, we see that the notions of operator ideals, generating topologies,
  and generating bornologies are equivalent (see also \cite{West} for
the operator algebra version).

\begin{theorem}[\cite{Step70, Step73, WW88,W08}]\label{thm:tri_edges}
Let $\gA$ be an operator ideal, $\cP$ a generating topology and $\cM$ a generating bornology on
Banach spaces.
We have
\begin{enumerate}
    \item $\sT(\gA) = \{\sT(\gA)(X) : X \in \sC\}$ is a generating topology.
    \item $\sB(\gA) = \{\sB(\gA)(Y) : Y \in \sC\}$ is a generating bornology.
    \item $\sO(\cP) = \{\sO(\cP)(X,Y) : X,Y \in \sC\}$ is an operator ideal.
    \item $\sO(\cM) = \{\sO(\cM)(X,Y) : X,Y \in \sC\}$ is an operator ideal.
    \item $\cP^\circ = \{\cP^\circ(Y) : Y \in \sC\}$ is a generating bornology.
    \item $\cM^\circ = \{\cM^\circ(Y) : Y \in \sC\}$ is a generating topology.
    \item $\sO(\sT(\gA)) = \gA^\inj$.
    \item $\sO(\sB(\gA)) = \gA^\sur$.
    \item $\sT(\sO(\cP))  = \cP$.
    \item $\sB(\sO(\cM))  = \cM$.
\end{enumerate}
\end{theorem}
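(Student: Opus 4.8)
The plan is to split the ten assertions into three layers: the well-definedness statements (1)--(6), the two round trips (9)--(10) that recover $\cP$ and $\cM$ exactly, and the two round trips (7)--(8) that recover the injective and surjective hulls. Throughout, the workhorse is the ideal property OI$_2$ together with two ``bridge'' facts: every finite-rank operator lies in each operator ideal (a standard consequence of OI$_1$ and OI$_2$), and $\s(X,X')\In\cP(X)\In\cP_\ori(X)$ (resp.\ $\cM_\fin\In\cM\In\cM_\von$).

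For (1)--(4) I would verify the axioms directly. In (1) the upper bound in GT$_1$ is immediate, since an $\gA$-seminorm $p\le q\circ T$ is dominated by the continuous seminorm $q\circ T$; the lower bound holds because each $x'\in X'$ is $x\mapsto|x'(x)|=|Tx|$ for the rank-one $T\in\gA(X,\bk)$, hence an $\gA$-seminorm; and GT$_2$ is the computation $p\circ S\le q\circ(TS)$ with $TS\in\gA$ by OI$_2$. Statement (3) is formal: $\sO(\cP)(X,Y)=\gL(X_\cP,Y)$ contains the finite-rank operators because $\s(X,X')\In\cP(X)$, and $RTS$ is $\cP$-continuous because $S$ is $\cP$-to-$\cP$ continuous by GT$_2$. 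Items (2) and (4) are the bornological mirrors, with projective/inductive and seminorms/bounded sets interchanged. For the polar statements (5)--(6) I would transpose through the strong dual: GT$_1$ and GB$_1$ are interchanged by the bipolar theorem, GT$_2$/GB$_2$ survive passage to polars, and Lemma~2.1 on $\s$-disked versus infracomplete disks certifies that the polar objects are honest convex bornologies, resp.\ locally convex topologies.

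For (9), $\sT(\sO(\cP))=\cP$, I would prove the two inclusions by hand. The inclusion $\cP\In\sT(\sO(\cP))$ uses local Banach spaces: a $\cP(X)$-continuous seminorm $p$ makes the canonical map $Q_p\colon X\to\wX_p$ onto the completion of $X/\ker p$ a member of $\sO(\cP)(X,\wX_p)$, and $p=\|\,\cdot\,\|\circ Q_p$ displays $p$ as an $\sO(\cP)$-seminorm. The reverse inclusion is soft: any $\sO(\cP)$-seminorm is $\le q\circ T$ with $T$ $\cP$-continuous, hence already $\cP(X)$-continuous. Part (10) is dual. The inclusion $\sB(\sO(\cM))\In\cM$ is soft, since members of $\sO(\cM)$ carry bounded sets into $\cM(Y)$-bounded sets and $\cM(Y)$ is solid (VB$_2$). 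For $\cM\In\sB(\sO(\cM))$, given $B\in\cM(Y)$ I would enlarge $B$ to a $\cM(Y)$-bounded Banach disk $D$ and factor the inclusion $Y(D)\hookrightarrow Y$; it lies in $\sO(\cM)(Y(D),Y)$ and carries the unit ball $D\supseteq B$ onto $B$. The delicate point here is the existence of such a $D$, which is exactly where the Banach-space hypothesis enters, via Lemma~2.1 (closed bounded disks of a Banach space are infracomplete).

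The main obstacle is (7), with (8) its mirror. One inclusion is soft: if $T\in\gA(X,Y)$ then $x\mapsto\|Tx\|$ is an $\gA$-seminorm, so $\gA\In\sO(\sT(\gA))$; moreover $\sO(\sT(\gA))$ is \emph{injective}, since $\|Tx\|\le\|Sx\|$ with $S\in\sO(\sT(\gA))$ forces $\|Tx\|$ to be dominated by the same $\gA$-seminorm that dominates $\|Sx\|$, so minimality of the injective hull yields $\gA^\inj\In\sO(\sT(\gA))$. The hard inclusion $\sO(\sT(\gA))\In\gA^\inj$ requires decoding $\sT(\gA)$-continuity: it gives $\|Tx\|\le q(Sx)$ with $S\in\gA(X,Z)$, and passing to the local Banach space of $q$ replaces this by a genuine domination $\|Tx\|\le\|S_0x\|$ with $S_0=Q_qS\in\gA$. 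The crux is then Pietsch's characterization that such a domination is \emph{equivalent} to $T\in\gA^\inj$ --- concretely, that $T\in\gA^\inj$ iff $J_YT\in\gA$ for the canonical metric injection $J_Y\colon Y\to\ell^\infty(\Gamma)$. I expect this domination description of the injective hull (and, dually in (8), the factorization $TU_E\In S_0U_{E_0}$ through a metric surjection $\ell^1(\Gamma)\to E$ describing the surjective hull) to be the only genuinely nontrivial input beyond the bookkeeping above.
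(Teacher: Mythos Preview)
The paper does not supply a proof of this theorem at all: it is stated with attributions to \cite{Step70, Step73, WW88, W08} and the text moves on immediately to the quotient constructions $\sO(\cP/\cP_1)$, $\sO(\cM/\cM_1)$, $\sO(\cP/\cM)$. So there is nothing in the present paper to compare your argument against line by line.

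That said, your outline is precisely the strategy carried out in the cited sources. Parts (1)--(4) are indeed routine verifications from OI$_2$, GT$_2$, GB$_2$, with the finite-rank operators supplying the lower bounds in GT$_1$/GB$_1$; (9) is exactly the local-Banach-space factorisation $p=\|\cdot\|\circ Q_p$ used by Stephani, and (7)--(8) are reduced, as you say, to Pietsch's descriptions of $\gA^\inj$ and $\gA^\sur$ via the universal embedding $Y\hookrightarrow\ell^\infty(\Gamma)$ and the universal quotient $\ell^1(\Gamma)\twoheadrightarrow E$. The only place your sketch is thinner than the originals is (10): to enlarge $B\in\cM(Y)$ to a $\cM(Y)$-bounded Banach disk $D$ you need $\cM(Y)$ to be closed under some completing operation (closure, or the $\sigma$-disked hull of Lemma~2.1), and that is \emph{not} among the axioms (VB$_1$)--(VB$_5$) plus convexity listed here. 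In \cite{WW88} this is handled by working with $\sigma$-disked generating bornologies, or equivalently by first replacing $\cM$ by the bornology generated by the $\sigma$-disked hulls of its members; you should make explicit which route you take, since as written the step ``enlarge $B$ to a $\cM(Y)$-bounded Banach disk'' is not justified by the axioms alone.
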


For generating topologies $\cP$ and $\cP_1$, and generating
bornologies $\cM$ and $\cM_1$ on Banach spaces,  we can also associate to them
the operator ideals with components
\begin{align*}
\sO(\cP/\cP_1)(X,Y) &= \gL(X_\cP,Y_{\cP_1}),\\
\sO(\cM/\cM_1)(X,Y) &=L^\times(X^{\cM_1},Y^{\cM})\cap\gL(X,Y)\\
\sO(\cP/\cM)(X,Y) &= \gL(X_\cP,Y^\cM).
\end{align*}
These give rise to
\begin{align*}
\sO(\cP/\cP_1) &= \sO(\cP_1)^{-1}\circ\sO(\cP)\quad \text{\cite{Step80}},\\
\sO(\cM/\cM_1) &= \sO(\cM)\circ\sO(\cM_1)^{-1} \quad \text{\cite{Step83}},\\
\sO(\cP/\cM) &= \sO(\cM)\circ\sO(\cP) = \sO(\cM)\circ\sO(\cP)\quad \text{\cite{WW88}}.
\end{align*}
Here, the product $\gA\circ\gB$ consists of operators of the form $ST$ with
$S\in \gA$ and $T\in \gB$, while the quotient $\gA\circ\gB^{-1}$ consists
of those $S$ such that $ST\in \gA$ whenever $T\in \gB$.
 Readers are referred to Pietsch's classic \cite{Pie80}
 for information regarding quotients and products of operator ideals.

\section{Generating sequences}

The following is based on a version in \cite{Step80}.

\begin{definition}
A family $\Phi(E)$ of bounded sequences in a Banach space $E$ is
    called a \emph{vector sequential structure} if
    \begin{itemize}
        \item $\{ax_n + by_n\}\in \Phi(E)$ whenever $\{x_n\},\{y_n\}\in \Phi(E)$ and $a,b$ are scalars;
        \item every subsequence of a sequence in $\Phi(E)$ belongs to $\Phi(E)$.
    \end{itemize}
\end{definition}

Clearly, the biggest vector sequential structure on a Banach space $E$ is the family
$\Phi_b(E)$ of all bounded sequences in $E$.  On the other hand, we let
$\Phi_{\text{fin}}(E)$ to be the vector sequential structure on $E$ of all
bounded sequences $\{x_n\}$ with finite dimensional ranges, i.e., there is a finite
dimensional subspace of $F$ containing all
$x_n$'s.

Given two Banach spaces $E, F$ with vector sequential structures $\Phi, \Psi$, we let
$\gL(E^\Phi, F^\Psi)$ be the vector space of all continuous linear operators sending
a bounded sequence in $\Phi$ to a bounded sequence in $F$ with a subsequence in $\Psi$.

\begin{definition}
A family
$\Phi := \{\Phi(F) : F \text{ is a Banach space}\}$
of (bounded) vector sequential structures on each Banach space is called a \emph{generating sequential
structure} if
    \begin{description}
        \item[GS$_1$] $\Phi_{\text{fin}}(F) \subseteq \Phi(F) \subseteq \Phi_b(F)$ for every Banach space $F$.
        \item[GS$_2$] $\gL(E,F)\subseteq \gL(E^\Phi, F^\Phi)$ for all Banach spaces $E,F$.
    \end{description}
\end{definition}

\begin{definition}
Let $\gA$ be an operator ideal on Banach spaces.
Call a bounded sequence $\{y_n\}$ in a Banach space $F$ an \emph{$\gA$-sequence}
if there is a bounded sequence $\{x_n\}$ in a Banach space $E$ and a continuous
linear operator $T$ in $\gA(E,F)$ such that $Tx_n=y_n$, $n=1,2,\ldots$.
Denote by $\sS(\gA)(F)$ the family of all $\gA$-sequences in $F$.
\end{definition}

\begin{lemma}\label{lem:A-seq}
Let $\gA$ be an operator ideal on Banach spaces.
Then
$$
\sS(\gA) := \{\sS(\gA)(F): F \text{ is a Banach space}\}
$$
is a generating
sequential structure.
\end{lemma}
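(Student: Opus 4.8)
The plan is to check, in turn, that each $\sS(\gA)(F)$ is a vector sequential structure and that the family $\sS(\gA)$ satisfies GS$_1$ and GS$_2$. Boundedness of every member of $\sS(\gA)(F)$ and closure under passage to subsequences are immediate from the definition of an $\gA$-sequence: if $\{y_n\}$ is given by $y_n = Tx_n$ with $T\in\gA(E,F)$ and $\{x_n\}$ bounded, then any subsequence $\{y_{n_k}\}$ is produced by the same $T$ applied to the bounded sequence $\{x_{n_k}\}$, and is therefore again an $\gA$-sequence.

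For closure under linear combinations I would pass to a direct sum. Given $\gA$-sequences $\{y_n\}$ and $\{z_n\}$ in $F$, witnessed by $T_1\in\gA(E_1,F)$ with $\{u_n\}$ bounded in $E_1$ and $T_2\in\gA(E_2,F)$ with $\{v_n\}$ bounded in $E_2$, I form the Banach space $E_1\oplus E_2$ with coordinate projections $\pi_1,\pi_2$. For scalars $a,b$ the operator $S := a\,T_1\pi_1 + b\,T_2\pi_2$ lies in $\gA(E_1\oplus E_2,F)$: each summand is in the ideal by OI$_2$ (a member of $\gA$ composed with a continuous projection), and the sum remains in the vector space $\gA(E_1\oplus E_2,F)$ by OI$_1$. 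Since $\{(u_n,v_n)\}$ is bounded and $S(u_n,v_n)=a\,y_n+b\,z_n$, the combination is again an $\gA$-sequence. This settles that each $\sS(\gA)(F)$ is a vector sequential structure.

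Condition GS$_2$ is handled the same way from the opposite side. If $R\in\gL(E,F)$ and $\{x_n\}=\{Tw_n\}\in\sS(\gA)(E)$ with $T\in\gA(D,E)$ and $\{w_n\}$ bounded, then $RT\in\gA(D,F)$ by OI$_2$, and $RTw_n=Rx_n$; hence the \emph{entire} image $\{Rx_n\}$ already lies in $\sS(\gA)(F)$, so in particular it is bounded and has a subsequence (namely itself) in $\sS(\gA)(F)$. Thus $R\in\gL(E^{\sS(\gA)},F^{\sS(\gA)})$, as required.

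What remains is GS$_1$. The inclusion $\sS(\gA)(F)\subseteq\Phi_b(F)$ is exactly the boundedness noted above. For $\Phi_{\text{fin}}(F)\subseteq\sS(\gA)(F)$, given a bounded $\{y_n\}$ whose terms all lie in a finite-dimensional subspace $G\subseteq F$, I regard $\{y_n\}$ as a bounded sequence in $G$ and apply the inclusion $\iota\colon G\hookrightarrow F$; since $\iota(y_n)=y_n$, it suffices to know that $\iota\in\gA(G,F)$. This is where the single genuinely structural input enters, and is the step I expect to be the crux: every nonzero operator ideal contains all finite-rank operators. I would derive this from the axioms by factoring an arbitrary rank-one operator $x\mapsto a(x)\,y$ (with $a\in X'$, $y\in Y$) through a fixed nonzero $T_0\in\gA(X_0,Y_0)$: choosing $x_0$ with $T_0x_0\neq 0$ and, by Hahn--Banach, $\phi\in Y_0'$ with $\phi(T_0x_0)=1$, one writes the rank-one map as $RT_0S$ where $Sx=a(x)\,x_0$ and $Rw=\phi(w)\,y$, so that OI$_2$ places it in $\gA(X,Y)$; OI$_1$ then promotes this to every finite-rank operator, in particular to $\iota$. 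Once GS$_1$ and GS$_2$ are verified, $\sS(\gA)$ is a generating sequential structure.
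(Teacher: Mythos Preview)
Your proof is correct and follows essentially the same route as the paper's: the paper merely asserts that each $\sS(\gA)(F)$ is a vector sequential structure ``easily'', derives GS$_1$ from the fact that the finite-rank ideal $\gF$ is the smallest and $\gL$ the largest operator ideal, and notes that GS$_2$ is immediate from OI$_2$. Your argument supplies the details the paper omits --- in particular the direct-sum construction for closure under linear combinations and the factorization showing $\gF\subseteq\gA$ --- but the underlying strategy is identical.
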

\begin{proof}
It is easy to see that all $\sS(\gA)(F)$ is a bounded vector
sequential structure.  Since the ideal $\gF$ of continuous linear operators  of finite rank
is the smallest operator ideal, while the ideal $\gL$ of all continuous
linear operators is the biggest, we see that (GS$_1$) is satisfied.
On the other hand, (GS$_2$) follows from (OI$_2$) directly.
\end{proof}

We now work on the converse of Lemma \ref{lem:A-seq}.
Let $\Phi$ be a generating sequential structure.
Denote by $\sM^\Phi_\text{base}(E)$ the family of all bounded sets $M$ in a
Banach space $E$ such that every sequence $\{x_n\}$ in $M$ has a subsequence
$\{x_{n_k}\}$ in $\Phi(E)$.

\begin{lemma}\label{lem:seq2born}
The family $\sM^\Phi(E)$ of all (bounded) sets with circle hull in
$\sM^\Phi_\text{base}(E)$ in a Banach space $E$ forms a generating bornology $\sM^\Phi$.
\end{lemma}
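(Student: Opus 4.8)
The plan is to verify that each $\sM^\Phi(E)$ is a convex vector bornology and that the family $\sM^\Phi$ satisfies the two compatibility axioms (GB$_1$) and (GB$_2$). Throughout, the engine is a simple extraction principle: since $\Phi(E)$ is closed under (fixed) linear combinations and subsequences, whenever finitely many sequences take their values in a set $B\in\sM^\Phi_{\text{base}}(E)$, one may pass to a single subsequence along which all of them simultaneously belong to $\Phi(E)$, and then combine them inside $\Phi(E)$. I would record first that $\sM^\Phi_{\text{base}}(E)$ is stable under subsets, under sums $B_1+B_2$, and under multiplication by a fixed scalar: for a sum, a sequence $x_n+y_n$ with $x_n\in B_1$, $y_n\in B_2$ is treated by extracting a $\Phi$-subsequence for $\{x_n\}$, then a further one for $\{y_n\}$, and adding; the fixed-scalar case is identical via $\{x_n\}\mapsto\{\lambda x_n\}$.

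With this the vector-bornology axioms (VB$_1$)–(VB$_5$) for $\sM^\Phi(E)=\{M:\operatorname{ch}M\in\sM^\Phi_{\text{base}}(E)\}$ become routine. For (VB$_1$), given $x\in E$ the set $\operatorname{ch}\{x\}$ is bounded with one-dimensional span, so every sequence in it lies in $\Phi_{\text{fin}}(E)\subseteq\Phi(E)$ by (GS$_1$); hence $\{x\}\in\sM^\Phi(E)$. (VB$_2$) is monotonicity of the base under inclusion. For (VB$_3$) one uses $\operatorname{ch}(M_1+M_2)\subseteq\operatorname{ch}M_1+\operatorname{ch}M_2$ together with stability under sums; (VB$_4$) uses $\operatorname{ch}(\lambda M)=\lambda\operatorname{ch}M$ and the fixed-scalar stability; (VB$_5$) is immediate from idempotency $\operatorname{ch}\operatorname{ch}M=\operatorname{ch}M$.

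For (GB$_1$), the inclusion $\sM^\Phi(E)\subseteq\cM_\von(E)$ holds because every member is bounded by definition, while $\cM_\fin(E)\subseteq\sM^\Phi(E)$ follows since the circle hull of a finite set spans a finite-dimensional subspace, so all its sequences lie in $\Phi_{\text{fin}}(E)\subseteq\Phi(E)$; closing under (VB$_2$) then captures all of $\cM_\fin(E)$. For (GB$_2$), let $T\in\gL(E,F)$ and $M\in\sM^\Phi(E)$. As $T$ is linear it commutes with the circle hull, $\operatorname{ch}(TM)=T(\operatorname{ch}M)$, which is bounded; and for any sequence $Tw_n$ with $w_n\in\operatorname{ch}M$ I first extract $\{w_{n_k}\}\in\Phi(E)$ and then invoke (GS$_2$) to obtain a $\Phi(F)$-subsequence of $\{Tw_{n_k}\}$. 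Hence $TM\in\sM^\Phi(F)$, i.e.\ $T\in L^\times(E^{\sM^\Phi},F^{\sM^\Phi})$.

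The one genuinely delicate point — and the step I expect to be the main obstacle — is \emph{convexity}: $M\in\sM^\Phi(E)\Rightarrow\Gamma M\in\sM^\Phi(E)$. Putting $B=\operatorname{ch}M$ and using that $\Gamma M=\Gamma B$ is already balanced, this reduces to $\Gamma B\in\sM^\Phi_{\text{base}}(E)$. Here the finite-combination trick fails, because a point of $\Gamma B$ is a convex combination $\sum_i\lambda_ib_i$ with \emph{unboundedly} many terms. For a two-term combination $z_n=t_na_n+(1-t_n)c_n$ with $a_n,c_n\in B$ I would extract a single subsequence making $\{a_n\},\{c_n\},\{a_n-c_n\}\in\Phi(E)$ and $t_n\to t$, and write $z_n=[ta_n+(1-t)c_n]+(t_n-t)(a_n-c_n)$: the bracket lies in $\Phi(E)$, and the remainder is a norm-null sequence of the form $\{s_nv_n\}$ with $\{v_n\}\in\Phi(E)$ and $s_n\to0$. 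Everything then hinges on the \emph{null-absorption} statement that such $\{s_nv_n\}$ has a subsequence in $\Phi(E)$. This is precisely where completeness of $E$ must be used: it is trivial for the extreme structures $\Phi_{\text{fin}}$ and $\Phi_b$, and in the motivating examples it is exactly the content of Mazur's and Krein's theorems (the closed absolutely convex hull of a norm- resp.\ weakly compact set is again compact). In general I would prove null-absorption by passing to a subsequence with $\sum_n|s_n|<\infty$, realizing $\{s_nv_n\}$ through a compact diagonal-type operator and applying (GS$_2$); an arbitrary finite combination $\sum_i\lambda_i^{(n)}b_i^{(n)}$ I would attack by the same device, factoring it through the natural map $\ell^1\to E$, $e_i\mapsto b_i^{(n)}$, with a diagonal extraction over the countably many $b_i^{(n)}\in B$ and the Banach-disk structure of $\Gamma_\sigma B$ from the lemma recalled above. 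I expect this null-absorption/completeness step to be the crux; the remaining verifications are bookkeeping.
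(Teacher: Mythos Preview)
Your verifications of (VB$_1$)--(VB$_5$), (GB$_1$), and (GB$_2$) are correct and match the paper's proof essentially line for line; your handling of (GB$_2$) via $\operatorname{ch}(TM)=T(\operatorname{ch}M)$ is in fact slightly cleaner than the paper's, which only inspects sequences in $TM$ rather than in its circle hull.

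The substantive divergence is convexity. The paper's proof \emph{does not address it at all}: after the five vector-bornology axioms and (GB$_1$)--(GB$_2$) it simply concludes that $\sM^\Phi(E)$ ``is a vector bornology''. Either the word ``convex'' in the definition of generating bornology is being tacitly relaxed, or the paper leaves a gap; in any event the downstream results (Theorem~\ref{thm:seq2OI} and beyond) only use that $TU_E\in\sM^\Phi(F)$ and never invoke $\Gamma$-stability of $\sM^\Phi$.

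Your attempt to supply convexity, however, has a genuine hole. The null-absorption step --- that $\{s_nv_n\}$ has a $\Phi$-subsequence whenever $\{v_n\}\in\Phi(E)$ and $s_n\to 0$ --- cannot be extracted from (GS$_2$) in the way you propose: (GS$_2$) feeds a $\Phi$-sequence through a \emph{single} continuous linear operator, whereas $v_n\mapsto s_nv_n$ is not given by one operator, and your compact-diagonal factorization realizes $\{s_{n_k}v_{n_k}\}$ as $T e_k$ for some $T:c_0\to E$ (or $\ell^1\to E$), where the unit vectors $\{e_k\}$ need not lie in $\Phi$ of the domain. The same obstruction blocks the $\ell^1$-factorization for general finite convex combinations. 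From the bare axioms on $\Phi$ alone, $\Gamma$-stability of $\sM^\Phi$ does not evidently follow; extra hypotheses (normality, or $\Phi=\sS(\gA)$ for an ideal $\gA$) are where the intended examples acquire it. So on this point you have correctly spotted something the paper glosses over, but your proposed fix does not close it.
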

\begin{proof}
Let $M, N\in \sM^\Phi(E)$, let $\lambda$ be a scalar, and let $W\subseteq M$.  It is
easy to see that the circle hull $\text{ch}\, M$, $\lambda M$ and $W$ are all
in $\sM^\Phi(E)$.  For the sum $M+N$, we first notice that both
$\text{ch}\, M$ and $\text{ch}\, N$ belong to $\sM^\Phi_\text{base}(E)$.  Now
$$
M+N \subseteq \text{ch}\, M + \text{ch}\,  N \in \sM^\Phi_{\text{base}}(E)
$$
and
$$
\text{ch}\, (M+N) \subseteq \text{ch}\, M + \text{ch}\, N
$$
give $M+N\in \sM^\Phi(E)$.  All these together say that
$\sM^\Phi(E)$ is a vector bornology on the Banach space $E$.

The condition (GB$_1$) follows directly from (GS$_1$).  For (GB$_2$),
let $T\in \gL(E,F)$ and $M\in \sM^\Phi(E)$.
Then for every sequence $\{Tx_n\}$ in $TM$, we
will have a subsequence of $\{x_n\}$ in $\Phi(E)$.
By (GS$_2$), we have a further subsequence $\{Tx_{n_k}\}$ belonging
to $\sM^\Phi(F)$.
\end{proof}

\begin{definition}
A generating sequential structure $\Phi$ is said to be \emph{normal}
if for every $\{x_n\}$ in $\Phi(E)$ and every bounded scalar sequence $(\lambda_n)$ in $\ell^\infty$
of norm not greater than $1$, we have $\{\lambda_n x_n\}\in \Phi(E)$.
\end{definition}

It is clear that $\sS(\gA)$ is normal for any operator ideal $\gA$.

\begin{lemma}\label{lem:base}
For every normal generating sequential structure $\Phi$, we have
$$
\sM^\Phi_{\rm{base}} = \sM^\Phi.
$$
\end{lemma}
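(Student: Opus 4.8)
The plan is to prove the two inclusions $\sM^\Phi(E) \subseteq \sM^\Phi_{\text{base}}(E)$ and $\sM^\Phi_{\text{base}}(E) \subseteq \sM^\Phi(E)$ separately, for an arbitrary Banach space $E$. The first is immediate from the definitions, while the second is exactly where the normality hypothesis is consumed. Throughout I will use that $\sM^\Phi_{\text{base}}(E)$ is \emph{hereditary}: if $N \in \sM^\Phi_{\text{base}}(E)$ and $M \subseteq N$, then any sequence drawn from $M$ is in particular a sequence in $N$, hence has a subsequence in $\Phi(E)$, so $M \in \sM^\Phi_{\text{base}}(E)$.

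For the inclusion $\sM^\Phi(E) \subseteq \sM^\Phi_{\text{base}}(E)$, suppose $M \in \sM^\Phi(E)$. By definition this says $\text{ch}\, M \in \sM^\Phi_{\text{base}}(E)$, and since $M \subseteq \text{ch}\, M$ (take the scalar $1$, which has modulus $\leq 1$), heredity gives $M \in \sM^\Phi_{\text{base}}(E)$ at once. No hypothesis beyond the definitions is needed here.

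For the reverse inclusion, the key claim is that $\sM^\Phi_{\text{base}}(E)$ is itself stable under forming circle hulls; granting this, any $M \in \sM^\Phi_{\text{base}}(E)$ yields $\text{ch}\, M \in \sM^\Phi_{\text{base}}(E)$, which is precisely the assertion $M \in \sM^\Phi(E)$. To establish the claim I would fix $M \in \sM^\Phi_{\text{base}}(E)$ and an arbitrary sequence $\{z_n\}$ in $\text{ch}\, M$, writing each term as $z_n = \lambda_n x_n$ with $|\lambda_n| \leq 1$ and $x_n \in M$. Since $M \in \sM^\Phi_{\text{base}}(E)$, the sequence $\{x_n\}$ admits a subsequence $\{x_{n_k}\} \in \Phi(E)$; the matching scalars $(\lambda_{n_k})_k$ then form an element of $\ell^\infty$ of norm at most $1$, so normality of $\Phi$ forces $\{\lambda_{n_k} x_{n_k}\} = \{z_{n_k}\} \in \Phi(E)$. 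As $\text{ch}\, M$ is bounded (being the circle hull of a bounded set), this shows $\text{ch}\, M \in \sM^\Phi_{\text{base}}(E)$.

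The only place any hypothesis beyond the definitions enters is the passage from $\{x_{n_k}\} \in \Phi(E)$ to $\{z_{n_k}\} = \{\lambda_{n_k} x_{n_k}\} \in \Phi(E)$, and this is exactly the role of normality: without it, the circle hull of a base set need not remain a base set and the two families would genuinely differ. I therefore expect no serious obstacle beyond the bookkeeping of the extraction—one must take the scalars indexed by the \emph{same} subsequence $(n_k)$ that was used for the vectors, so that the resulting $\{z_{n_k}\}$ is an honest subsequence of $\{z_n\}$ rather than a reindexing.
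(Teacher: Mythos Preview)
Your proof is correct and follows essentially the same route as the paper's own argument: both reduce the nontrivial inclusion $\sM^\Phi_{\text{base}}(E) \subseteq \sM^\Phi(E)$ to showing that $\sM^\Phi_{\text{base}}(E)$ is closed under circle hulls, and both carry this out by writing a sequence in $\text{ch}\, M$ as $\lambda_n y_n$, extracting a $\Phi$-subsequence of $\{y_n\}$ from $M$, and invoking normality on the matching scalars. You are simply more explicit than the paper about the easy inclusion and about heredity of $\sM^\Phi_{\text{base}}$.
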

\begin{proof}
It suffices to show that every $M$ in $\sM^\Phi_{\text{base}}(E)$
has its circle hull $\text{ch}\, M$ in $\sM^\Phi_{\text{base}}(E)$ for
each Banach space $E$, i.e., $\sM^\Phi_{\text{base}}$ is itself circled.
To this end, let $\{x_n\}$ be from $\text{ch}\, M$.  Then there is
a $(\lambda_n)$ from $\ell^\infty$ of norm not greater than $1$ such that
$$
x_n = \lambda_n y_n, \quad n=1,2,\ldots
$$
for some $\{y_n\}$ in $M$.  Now, there is a subsequence $\{y_{n_k}\}$ in $\Phi(E)$
ensuring that $\{x_{n_k}\} = \{\lambda_{n_k} y_{n_k}\} \in \Phi(E)$ for
$\Phi$ being normal.
\end{proof}

\begin{definition}
Let $\Phi$ be a generating sequential structure on Banach spaces.
Let $E,F$ be Banach spaces.
Denote by $\sO(\Phi)(E,F)$ the family of all continuous linear operators $T$
in $\gL(E,F)$ sending every bounded sequence $\{x_n\}$ in $E$ to a bounded
sequence in  $F$ with a subsequence $\{Tx_n\}$ in $\Phi(F)$.  In other words,
$\sO(\Phi)(E,F) = \gL(E^{\Phi_b}, F^\Phi)$.
\end{definition}

\begin{theorem}\label{thm:seq2OI}
Let $\Phi$ be a generating sequential structure on Banach spaces.
Then $\sO(\Phi)$ is a surjective operator ideal.  More precisely,
$$
\sO(\Phi)= \sO(\sM^\Phi).
$$
\end{theorem}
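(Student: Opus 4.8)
The plan is to prove the displayed identity $\sO(\Phi) = \sO(\sM^\Phi)$ first and deduce everything else from it. Once the identity is in hand, $\sM^\Phi$ is a generating bornology by Lemma~\ref{lem:seq2born}, so $\sO(\sM^\Phi)$ is an operator ideal by Theorem~\ref{thm:tri_edges}(4), whence $\sO(\Phi)$ is too. Surjectivity I would check directly: if $S\in\sO(\sM^\Phi)(E_0,F)$ and $T\in\gL(E,F)$ satisfy $TU_E\subseteq SU_{E_0}$, then $SU_{E_0}\in\sM^\Phi(F)$ (as $S$ sends the bounded set $U_{E_0}$ to an $\sM^\Phi(F)$-bounded set), so $TU_E\in\sM^\Phi(F)$ by (VB$_2$); scaling by (VB$_4$) and (VB$_2$) shows $T$ carries every bounded set into $\sM^\Phi(F)$, giving $T\in\sO(\sM^\Phi)(E,F)$. (Alternatively one may invoke Theorem~\ref{thm:tri_edges}(8),(10), which together force $\sO(\sM^\Phi)=\sO(\sM^\Phi)^{\sur}$.)

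For the inclusion $\sO(\Phi)(E,F)\subseteq\sO(\sM^\Phi)(E,F)$, I would fix $T\in\sO(\Phi)(E,F)$ and a bounded set $B$ in $E$, and verify $TB\in\sM^\Phi(F)$, i.e.\ that $\text{ch}(TB)\in\sM^\Phi_{\text{base}}(F)$. Since $T$ is linear and homogeneous it commutes with the circle hull, $\text{ch}(TB)=T(\text{ch}\,B)$, and $\text{ch}\,B$ is again bounded. Hence any sequence drawn from $\text{ch}(TB)$ has the form $\{Tw_n\}$ with $\{w_n\}$ a bounded sequence in $\text{ch}\,B$; by the defining property of $\sO(\Phi)$ it admits a subsequence $\{Tw_{n_k}\}$ lying in $\Phi(F)$. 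This is precisely the requirement for $\text{ch}(TB)\in\sM^\Phi_{\text{base}}(F)$, so $T$ sends bounded sets to $\sM^\Phi(F)$-bounded sets, and being continuous (as $\sO(\Phi)\subseteq\gL$) it lies in $\sO(\sM^\Phi)(E,F)$.

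Conversely, for $\sO(\sM^\Phi)(E,F)\subseteq\sO(\Phi)(E,F)$, I would take $T\in\sO(\sM^\Phi)(E,F)$ and an arbitrary bounded sequence $\{x_n\}$ in $E$. The underlying set $B=\{x_n:n\ge 1\}$ is bounded, so $TB$ is $\sM^\Phi(F)$-bounded, meaning $\text{ch}(TB)\in\sM^\Phi_{\text{base}}(F)$. Because $TB\subseteq\text{ch}(TB)$ and $\sM^\Phi_{\text{base}}(F)$ is hereditary (any sequence in a bounded subset is a sequence in the ambient set, hence still has a subsequence in $\Phi(F)$), we get $TB\in\sM^\Phi_{\text{base}}(F)$. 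The sequence $\{Tx_n\}$ lies in $TB$, so it has a subsequence in $\Phi(F)$; as $\{x_n\}$ was arbitrary, $T\in\sO(\Phi)(E,F)$.

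The argument is essentially a translation between the set-theoretic and the sequential formulations, so I expect the only delicate point to be the circle hull built into the definition of $\sM^\Phi$. The key observation that clears it is that a linear map satisfies $T(\text{ch}\,B)=\text{ch}(TB)$; this lets the forward inclusion reach the circle-hull condition and, in the reverse direction, the inclusion $TB\subseteq\text{ch}(TB)$ together with heredity of $\sM^\Phi_{\text{base}}$ suffices. In particular normality of $\Phi$ (and hence Lemma~\ref{lem:base}) is \emph{not} needed for this equality.
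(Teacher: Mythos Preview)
Your argument is correct and the two inclusions are handled essentially as in the paper. The main difference is organizational: the paper first notes that the ideal axioms for $\sO(\Phi)$ are trivial and verifies surjectivity \emph{directly on the sequential side} (given $TU_E\subseteq SU_{E_0}$ with $S\in\sO(\Phi)(E_0,F)$, lift each bounded $\{x_n\}$ to $\{y_n\}$ with $Tx_n=Sy_n$ and extract a $\Phi(F)$-subsequence), and only afterwards proves the identity $\sO(\Phi)=\sO(\sM^\Phi)$. You instead establish the identity first and then harvest the ideal property and surjectivity from the bornological machinery (Lemma~\ref{lem:seq2born} and Theorem~\ref{thm:tri_edges}). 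Your route is a little more conceptual and avoids a separate direct check of (OI$_1$)--(OI$_2$); the paper's route is slightly more self-contained. You are also more explicit than the paper about the circle-hull bookkeeping via $T(\text{ch}\,B)=\text{ch}(TB)$, which the paper sweeps under ``immediate consequence of the definition''; your observation that normality of $\Phi$ is not needed here is correct.
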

\begin{proof}
The ideal property of $\sO(\Phi)$ is trivial.  For the surjectivity of
$\sO(\Phi)$, we let $S\in \sO(\Phi)(E_0,F)$ and $T\in\gL(E,F)$ such that
$TU_E\subseteq SU_{E_0}$.  Here, $U_E, U_{E_0}$ are the closed unit balls of
$E, E_0$, respectively.  We need to show that $T\in \sO(\Phi)(E,F)$.  In fact,
for any bounded sequence $\{x_n\}$ in $E$, we have
$$
Tx_n = Sy_n, \quad n=1,2,\ldots,
$$
with some bounded sequence $\{y_n\}$ in $F$.  Now, the fact $\{Sy_n\}$ has
a subsequence $\{Sy_{n_k}\}\in \Phi(F)$ implies that $T\in \sO(\Phi)(E,F)$, as
asserted.

Finally, we check the representation.  The inclusion $\sO(\Phi)\subseteq \sO(\sM^\Phi)$ is
an immediate consequence of the definition of $\sM^\Phi$.  Conversely, let $T\in \sO(\sM^\Phi)(E,F)$,
i.e., $TU_E\in \sM^\Phi(F)$.  Let $\{x_n\}$ be a bounded sequence in $E$.
We can assume $\|x_n\|\leq 1$ for all $n$.  Then $\{Tx_n\}\subseteq TU_E$ and hence
possesses a subsequence $\{Tx_{n_k}\}\in \Phi(F)$.  In other words, $T\in \sO(\Phi)(E,F)$.
\end{proof}

\begin{corollary}
If a generating sequential structure $\Phi$ is normal then
$$
\sS(\sO(\Phi))=\Phi.
$$
Conversely, if $\gA$ is an operator ideal on Banach spaces then
$$
\sO(\sS(\gA)) = \gA^\sur.
$$
\end{corollary}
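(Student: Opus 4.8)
The plan is to route both halves through the generating bornology $\sM^\Phi$ (resp.\ $\sB(\gA)$), using Theorem \ref{thm:seq2OI} to trade a sequential operator ideal for a bornological one and Theorem \ref{thm:tri_edges} to close the loop. The elementary fact driving everything is a \emph{range characterization} of $\sS$: for any operator ideal $\gA$, a bounded sequence $\{y_n\}$ lies in $\sS(\gA)(F)$ if and only if its range $\{y_n:n\ge 1\}$ is $\sB(\gA)(F)$--bounded. Indeed, if $y_n=Tx_n$ with $T\in\gA(E,F)$ and $\{x_n\}$ bounded, then $\{y_n:n\ge 1\}\In T(\{x_n:n\ge 1\})$ exhibits the range as $\sB(\gA)(F)$--bounded; conversely, if $\{y_n:n\ge 1\}\In TA$ with $A$ bounded and $T\in\gA$, choosing $x_n\in A$ with $Tx_n=y_n$ recovers $\{y_n\}$ as an $\gA$--sequence.

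For the converse identity $\sO(\sS(\gA))=\gA^\sur$, I would first obtain $\gA^\sur\In\sO(\sS(\gA))$ for free. If $T\in\gA(E,F)$ then for every bounded $\{x_n\}$ the image $\{Tx_n\}$ is \emph{itself} an $\gA$--sequence, so $T\in\sO(\sS(\gA))$; since $\sS(\gA)$ is a generating sequential structure (Lemma \ref{lem:A-seq}) the ideal $\sO(\sS(\gA))$ is surjective by Theorem \ref{thm:seq2OI}, hence contains the smallest surjective ideal $\gA^\sur$ over $\gA$. For the reverse inclusion I would use $\sO(\sS(\gA))=\sO(\sM^{\sS(\gA)})$ from Theorem \ref{thm:seq2OI} and $\sO(\sB(\gA))=\gA^\sur$ from Theorem \ref{thm:tri_edges}(8); by monotonicity of $\sO(\cdot)$ in the bornology it then suffices to show $\sM^{\sS(\gA)}\In\sB(\gA)$. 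As $\sS(\gA)$ is normal, Lemma \ref{lem:base} replaces $\sM^{\sS(\gA)}$ by its base, and the range characterization identifies a base set as one all of whose sequences admit a subsequence with $\sB(\gA)$--bounded range — which must be upgraded to $\sB(\gA)$--boundedness of the whole set.

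For the first identity $\sS(\sO(\Phi))=\Phi$ (with $\Phi$ normal) I would assemble the same ingredients in the opposite order. Theorem \ref{thm:seq2OI} gives $\sO(\Phi)=\sO(\sM^\Phi)$, and Theorem \ref{thm:tri_edges}(10) gives $\sB(\sO(\sM^\Phi))=\sM^\Phi$; substituting into the range characterization yields $\sS(\sO(\Phi))(F)=\{\,\{y_n\}:\{y_n:n\ge 1\}\in\sM^\Phi(F)\,\}$. Normality and Lemma \ref{lem:base} let me pass to $\sM^\Phi_{\text{base}}$, so the task collapses to the equivalence ``$\{y_n\}\in\Phi(F)$ if and only if every sequence drawn from the range $\{y_n:n\ge 1\}$ has a subsequence in $\Phi(F)$.'' The forward direction is immediate from the two closure axioms of a vector sequential structure: closure under subsequences handles the genuinely injective subsequences of the range, while $\Phi_{\text{fin}}(F)\In\Phi(F)$ handles the eventually constant ones.

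\textbf{Main obstacle.} Both halves reduce to the same reverse step: passing from ``every subsequence of $\{y_n\}$ has a further subsequence in $\Phi(F)$'' (equivalently, range $\in\sM^\Phi_{\text{base}}$) to ``$\{y_n\}\in\Phi(F)$'', and likewise from the sequential condition on a set to genuine $\sB(\gA)$--boundedness. This is a subsequential-closure statement that fails for an arbitrary subsequence-closed family, so it is precisely here that normality must be used in full strength. The plan is to exploit that a normal $\Phi$ is stable under multiplication by $\ell^\infty$--sequences of norm $\le 1$, in particular by $0/1$ masks: beginning with a subsequence already known to lie in $\Phi$ and reassembling the full sequence by masking and finite linear combination — the same mechanism that powers Lemma \ref{lem:base} — I would place $\{y_n\}$ itself inside $\Phi(F)$. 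Arranging this reconstruction to deliver the \emph{entire} sequence, rather than merely another subsequence, is the delicate point on which the whole corollary turns.
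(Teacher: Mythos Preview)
The paper states this corollary with no proof, so there is nothing to compare against; I can only assess your plan on its own merits. Your reduction is correct: via the range characterization of $\sS$, together with $\sO(\Phi)=\sO(\sM^\Phi)$ (Theorem~\ref{thm:seq2OI}) and $\sB(\sO(\cM))=\cM$, $\sO(\sB(\gA))=\gA^\sur$ (Theorem~\ref{thm:tri_edges}), both identities collapse to the single implication you isolate at the end --- from ``every sequence drawn from the range of $\{y_n\}$ has a subsequence in $\Phi(F)$'' to ``$\{y_n\}\in\Phi(F)$'' (equivalently, for the second half, from $\sM^{\sS(\gA)}$--boundedness to $\sB(\gA)$--boundedness). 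The easy inclusions $\Phi\In\sS(\sO(\Phi))$ and $\gA^\sur\In\sO(\sS(\gA))$ go through exactly as you outline.

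The gap is real, and the masking mechanism you propose cannot close it. Normality takes a sequence $\{y_{n_k}\}_k\in\Phi$ to $\{\lambda_k y_{n_k}\}_k\in\Phi$; the output is still indexed by $k$. Likewise, the vector--sequential--structure axioms act only on sequences sharing the \emph{same} index set (pointwise linear combinations) or pass to a \emph{smaller} one (subsequences). No available operation enlarges the index set, inserts terms at the missing positions $n\notin\{n_k\}$, or re-indexes a subsequence back onto $\mathbb N$; so there is no way to rebuild $\{y_n\}_{n\in\mathbb N}$ from $\{y_{n_k}\}_k$. Your appeal to Lemma~\ref{lem:base} is misleading here: in that proof the mask is applied to $\{y_{n_k}\}_k$ to obtain $\{\lambda_{n_k}y_{n_k}\}_k$, a sequence with the \emph{same} index set --- the lemma never reconstructs a longer sequence. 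For the second half the same obstruction reads as the unproved inclusion $\sM^{\sS(\gA)}\In\sB(\gA)$: a generating bornology is hereditary and closed under finite unions, but nothing in (GB$_1$)--(GB$_2$) forces a set, all of whose sequences are $\sB(\gA)$--bounded only along subsequences, to be $\sB(\gA)$--bounded itself. (That this does hold for $\gA=\gK$ or $\gA=\gW$ is a special feature of the sequential characterizations of compactness and weak compactness, not a consequence of normality.) As it stands, your plan does not supply the idea needed to cross this step.
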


\begin{theorem}\label{thm:quotient}
Let $\Phi,\Psi$ be two generating sequential structures.
Let
$$
\sO(\Phi/\Psi)(E,F) := \gL(E^\Psi, F^\Phi),
$$
i.e., $T\in \sO(\Phi/\Psi)(E,F)$
if and only if the continuous linear operator $T$ sends each bounded sequence
$\{x_n\}$ in $\Psi(E)$ to a sequence in $F$ with a subsequence $\{Tx_{n_k}\}$ in
$\Phi(F)$.  Then $\sO(\Phi/\Psi)(E,F)$ is an operator ideal on Banach spaces.  Moreover,
if $\Psi$ is normal, then
$$
\sO(\Phi/\Psi)= \sO(\sM^\Phi/\sM^\Psi)= \sO(\sM^\Phi)\circ\sO(\sM^\Psi)^{-1}.
$$
\end{theorem}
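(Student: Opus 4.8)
\emph{Strategy.} The statement has three parts: that $\sO(\Phi/\Psi)$ is an operator ideal, the middle identity $\sO(\Phi/\Psi)=\sO(\sM^\Phi/\sM^\Psi)$, and the right-hand identity $\sO(\sM^\Phi/\sM^\Psi)=\sO(\sM^\Phi)\circ\sO(\sM^\Psi)^{-1}$. The last of these costs nothing: by Lemma~\ref{lem:seq2born} both $\sM^\Phi$ and $\sM^\Psi$ are generating bornologies, so it is exactly the bornological quotient formula $\sO(\cM/\cM_1)=\sO(\cM)\circ\sO(\cM_1)^{-1}$ of \cite{Step83} recorded in Section~2, applied to $\cM=\sM^\Phi$ and $\cM_1=\sM^\Psi$. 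So the work is in the first two parts, and the recurring tool will be a diagonal selection of subsequences controlled by the two axioms of a vector sequential structure.

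For the ideal axioms, (OI$_1$) needs $\sO(\Phi/\Psi)(E,F)$ to be a nonzero subspace. It contains every finite-rank operator, since if $T$ has finite rank and $\{x_n\}\in\Psi(E)$ then $\{Tx_n\}$ is bounded with finite-dimensional range, hence lies in $\Phi_{\text{fin}}(F)\subseteq\Phi(F)$ by (GS$_1$). For closure under linear combinations I would, given $T_1,T_2\in\sO(\Phi/\Psi)(E,F)$ and $\{x_n\}\in\Psi(E)$, first extract $\{T_1x_{n_k}\}\in\Phi(F)$, note $\{x_{n_k}\}\in\Psi(E)$ by the subsequence axiom, extract a further $\{T_2x_{n_{k_j}}\}\in\Phi(F)$, observe that $\{T_1x_{n_{k_j}}\}$ stays in $\Phi(F)$, and conclude $\{(aT_1+bT_2)x_{n_{k_j}}\}\in\Phi(F)$ from the linearity axiom of $\Phi(F)$. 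Axiom (OI$_2$) is direct: for $R\in\gL(F,F_0)$, $T\in\sO(\Phi/\Psi)(E,F)$, $S\in\gL(E_0,E)$ and $\{z_n\}\in\Psi(E_0)$, apply (GS$_2$) to $S$ to get $\{Sz_n\}\in\Psi(E)$, extract $\{TSz_{n_k}\}\in\Phi(F)$ via $T$, then apply (GS$_2$) to $R$ to obtain a subsequence of $\{RTSz_{n_k}\}$ in $\Phi(F_0)$.

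The middle identity compares $\sO(\Phi/\Psi)(E,F)$ with the operators sending $\sM^\Psi$-bounded sets to $\sM^\Phi$-bounded sets. For $\subseteq$ (which I expect to need no normality), let $T\in\sO(\Phi/\Psi)(E,F)$ and $M\in\sM^\Psi(E)$. Since $\sM^\Psi(E)$ has a basis of circled sets, I may take $M$ circled, so that $M\in\sM^\Psi_{\text{base}}(E)$ and, crucially, $TM$ is circled with $\text{ch}\,(TM)=TM$. Any sequence $\{Tm_n\}$ in $TM$ then has $\{m_n\}$ admitting a subsequence in $\Psi(E)$, whence $T\in\sO(\Phi/\Psi)$ produces a further subsequence of $\{Tm_n\}$ in $\Phi(F)$; thus $TM\in\sM^\Phi_{\text{base}}(F)\subseteq\sM^\Phi(F)$. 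The circledness of $M$ is what lets me avoid demanding normality of $\Phi$ here, since no stray scalars appear in $TM$.

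The reverse inclusion $\supseteq$ is the crux and the main obstacle. Given $T$ sending $\sM^\Psi$-bounded sets to $\sM^\Phi$-bounded sets and $\{x_n\}\in\Psi(E)$, I would show the range $M:=\{x_n:n\in\bn\}$ lies in $\sM^\Psi(E)$; then $TM=\{Tx_n\}$ is $\sM^\Phi$-bounded, and the defining property of $\sM^\Phi$ forces a subsequence of $\{Tx_n\}$ into $\Phi(F)$, giving $T\in\sO(\Phi/\Psi)$. To see $\text{ch}\,M\in\sM^\Psi_{\text{base}}(E)$, take a sequence $\{\mu_jx_{k_j}\}$ in $\text{ch}\,M$ with $|\mu_j|\leq 1$ and argue by cases on the indices $k_j$. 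If they take infinitely many values, pass to a strictly increasing index subsequence so that $\{x_{k_{j_i}}\}_i$ is a subsequence of $\{x_n\}$, hence in $\Psi(E)$, and normality of $\Psi$ pulls $\{\mu_{j_i}x_{k_{j_i}}\}_i$ back into $\Psi(E)$. If they take only finitely many values, some value repeats infinitely often, yielding a bounded subsequence supported on a single line, which belongs to $\Phi_{\text{fin}}(E)\subseteq\Psi(E)$ by (GS$_1$). This case split is exactly the delicate point: normality is needed to absorb the scalars $\mu_j$ in the generic case, while the degenerate finite-range case has to be salvaged separately through (GS$_1$).
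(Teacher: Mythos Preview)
Your argument is correct and follows the same line as the paper's own proof, though you spell out in detail the case analysis (indices taking infinitely many versus finitely many values) that the paper compresses into the single phrase ``by noting that $\Psi$ is normal,'' and you correctly observe that normality is used only in the inclusion $\supseteq$. One small slip: when you invoke (GS$_2$) for $S$ you only obtain a \emph{subsequence} of $\{Sz_n\}$ in $\Psi(E)$, not the full sequence, but passing to that subsequence first and then continuing exactly as you do (as you already phrase it correctly for $R$) repairs this immediately.
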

\begin{proof}
Conditions (OI$_1$) and (OI$_2$) follow from Conditions (GS$_1$) and (GS$_2$), respectively.

Assume $\Psi$ is normal.  Let $T\in \sO(\Phi/\Psi)(E,F)$ and $M\in \sM^\Psi(E)$ be circled.
Suppose, on contrary, $TM\notin \sM^\Phi(F)$.  By definition, there is a sequence $\{Tx_n\}$
in $TM$ having no subsequence $\{Tx_{n_k}\}$ in $\Phi(F)$.  But since $\{x_n\}\subseteq M$,
it has a subsequence $\{x_{n_k}\}$ in $\Psi(E)$.  And thus $\{Tx_{n_k}\}$ has a further subsequence
in $\Phi(F)$, a contradiction.

Conversely, let $T\in\sO(\sM^\Phi/\sM^\Psi)(E,F)$.  It is easy to see that
the range of any sequence $\{x_n\}$ in $\Psi(E)$ is a bounded set in $\sM^\psi(E)$, by
noting that $\Psi$ is normal.  The range of the sequence $\{Tx_n\}$ is bounded in $\sM^\Phi(F)$.
Therefore, there is a subsequence $\{Tx_{n_k}\}$ of $\{Tx_n\}$ belongs to $\Phi(F)$.
This ensures $T\in \sO(\Phi/\Psi)(E,F)$.
\end{proof}

\begin{definition}\label{defn:coordinated}
Given two generating sequential structures $\Phi$ and $\Psi$.  We say
that $\Psi(E)$ is \emph{coordinated to} $\Phi(E)$ if the following conditions holds.
Suppose $\{x_n\}\in \Psi(E)$.  The bounded sequence $\{x_n\}\notin \Phi(E)$ if and only if
$\{x_n\}$ has a subsequence $\{x_{n_k}\}$ which has no further subsequence belonging to $\Phi(E)$.
We write, in this case, $\Phi(E) \ll \Psi(E)$.  If $\Phi(E) \ll \Psi(E)$ for all Banach spaces
$E$ then we write $\Phi \ll \Psi$.
\end{definition}

\begin{theorem}\label{thm:coordinated-quotient}
Assume $\Phi,\Psi$ be two generating sequential structures such that $\Phi(F)\ll\Psi(F)$ for
some Banach space $F$.  Then $\sO(\Phi/\Psi)(E,F)$ consists of exactly those continuous
linear operators sending bounded sequences in $\Psi(E)$ to bounded sequences in $\Phi(F)$ for
any Banach space $E$.
\end{theorem}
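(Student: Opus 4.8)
The plan is to prove the asserted set equality by two inclusions, with all the real content concentrated in \emph{upgrading} the ``subsequence in $\Phi(F)$'' clause built into the definition of $\sO(\Phi/\Psi)$ into the stronger statement that the whole image sequence lies in $\Phi(F)$. This upgrade is precisely the service that the coordination hypothesis $\Phi(F)\ll\Psi(F)$ is meant to perform. The easy inclusion comes first: if a continuous linear $T$ sends every $\{x_n\}\in\Psi(E)$ to a sequence $\{Tx_n\}$ lying entirely in $\Phi(F)$, then $\{Tx_n\}$ is a bounded sequence in $F$ possessing itself as a subsequence in $\Phi(F)$, so $T\in\gL(E^\Psi,F^\Phi)=\sO(\Phi/\Psi)(E,F)$. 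Hence every operator of the described type already belongs to $\sO(\Phi/\Psi)(E,F)$.

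For the reverse inclusion I would fix $T\in\sO(\Phi/\Psi)(E,F)$ and $\{x_n\}\in\Psi(E)$, and argue by contradiction that $\{Tx_n\}\in\Phi(F)$. The driving mechanism is the contrapositive reading of Definition \ref{defn:coordinated}: for a sequence that genuinely lies in $\Psi(F)$, failure to belong to $\Phi(F)$ forces the existence of a subsequence none of whose further subsequences lies in $\Phi(F)$. Granting momentarily that $\{Tx_n\}\in\Psi(F)$, the assumption $\{Tx_n\}\notin\Phi(F)$ together with coordination produces a subsequence $\{Tx_{n_k}\}$ admitting no further subsequence in $\Phi(F)$. But $\{x_{n_k}\}$ is a subsequence of a member of $\Psi(E)$, hence lies again in $\Psi(E)$ by the subsequence axiom for vector sequential structures, so membership $T\in\sO(\Phi/\Psi)(E,F)$ guarantees that $\{Tx_{n_k}\}$ \emph{does} have a further subsequence in $\Phi(F)$ --- a contradiction. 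This forces $\{Tx_n\}\in\Phi(F)$, completing the inclusion and hence the equality.

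The step I expect to be the main obstacle is the parenthetical hypothesis that the \emph{entire} image $\{Tx_n\}$ lies in $\Psi(F)$, and not merely some subsequence of it. The axiom (GS$_2$), which yields $\gL(E,F)\subseteq\gL(E^\Psi,F^\Psi)$, only delivers a subsequence $\{Tx_{m_j}\}\in\Psi(F)$, whereas coordination is formulated as a property of sequences that honestly belong to $\Psi(F)$. The contradiction argument above does apply verbatim to any such $\{Tx_{m_j}\}$ and shows $\{Tx_{m_j}\}\in\Phi(F)$; the delicate point is promoting this from subsequences back to the full sequence $\{Tx_n\}$. In the principal application $\Psi=\Phi_b$ this is free, since every bounded sequence lies in $\Psi(F)$ automatically, and more generally it is exactly the subsequence-determined character of $\Psi$ that is needed. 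I would therefore single out, and verify carefully rather than by routine bookkeeping, the claim that no $\Psi(E)$-sequence can have its whole image escape $\Phi(F)$ while all of its $\Psi(F)$-subsequences remain inside $\Phi(F)$; once that is secured, coordination closes the argument.
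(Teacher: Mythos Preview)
Your approach is exactly the paper's: the nontrivial inclusion is argued by contradiction, using coordination to extract a subsequence $\{Tx_{n_k}\}$ admitting no further $\Phi(F)$-subsequence, and then contradicting this via $\{x_{n_k}\}\in\Psi(E)$ together with the defining property of $\sO(\Phi/\Psi)$. The paper omits the easy inclusion you supply.

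You have, however, flagged a genuine subtlety that the paper itself does not address. The paper simply writes ``Since $\{Tx_n\}\in\Psi(F)$ by (GS$_2$)'' and moves on; but as you correctly observe, (GS$_2$) as formulated --- namely $\gL(E,F)\subseteq\gL(E^\Psi,F^\Psi)$, with the latter defined via \emph{subsequences} --- only yields a subsequence of $\{Tx_n\}$ lying in $\Psi(F)$, not the whole image sequence. The paper offers no further justification. So the concern in your final paragraph is not residual bookkeeping on your side: you are being more careful than the published proof. In the intended applications ($\Psi=\Phi_{wc},\Phi_{wCa},\Phi_{\delta_0},\Phi_{\text{lim}}$) continuous linear maps preserve $\Psi$-sequences outright and the issue disappears; at the stated level of generality, both your argument and the paper's share this gap, and closing it would require either a stronger reading of (GS$_2$) or an additional hypothesis on $\Psi$.
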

\begin{proof}
Assume $T\in \sO(\Phi/\Psi)(E,F)$ and $\{x_n\}\in \Psi(E)$.  Now, suppose
$\{Tx_n\}\notin \Phi(F)$.  Since $\{Tx_n\}\in \Psi(F)$ by (GS$_2$), we
can verify the condition in Definition \ref{defn:coordinated}.
Therefore, we might have a subsequence $\{Tx_{n_k}\}$ having no further subsequence in
$\Phi(F)$.  However, by the facts that $\{x_{n_k}\}\in \Psi(E)$ and $T\in \sO(\Phi/\Psi)(E,F)$,
we arrive at the asserted contradiction.
\end{proof}

\begin{corollary}
Let $\Phi,\Psi$ be two generating sequential structures such that $\Phi \ll\Psi $ and
$\Psi$ is normal.  Then, the operator ideal $\sO(\sM^\Phi, \sM^\Psi)$ consists
of exactly those operators sending sequences in $\Psi$ to   sequences in $\Phi$.
\end{corollary}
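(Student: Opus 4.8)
The plan is to recognize that this Corollary is nothing more than the conjunction of the two immediately preceding theorems, so that no fresh argument is required beyond chaining them in the right order. I read the object $\sO(\sM^\Phi,\sM^\Psi)$ in the statement as the quotient ideal $\sO(\sM^\Phi/\sM^\Psi)$ introduced for generating bornologies, matching the notation $\sO(\cM/\cM_1)$ used earlier.

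First I would invoke Theorem~\ref{thm:quotient}. Since $\Psi$ is assumed normal, that theorem supplies the identification
$$
\sO(\Phi/\Psi) = \sO(\sM^\Phi/\sM^\Psi) = \sO(\sM^\Phi)\circ\sO(\sM^\Psi)^{-1},
$$
so the bornological quotient ideal named in the statement coincides, componentwise, with the sequential quotient ideal $\sO(\Phi/\Psi)$. At this stage the $a~priori$ description of $\sO(\Phi/\Psi)(E,F)$ is only that $T$ sends each $\{x_n\}\in\Psi(E)$ to a sequence $\{Tx_n\}$ possessing \emph{some} subsequence in $\Phi(F)$.

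Next I would apply Theorem~\ref{thm:coordinated-quotient}. The hypothesis $\Phi\ll\Psi$ means $\Phi(F)\ll\Psi(F)$ for every Banach space $F$, so the coordination requirement of that theorem is met for each target space. Consequently, for every pair $E,F$ the component $\sO(\Phi/\Psi)(E,F)$ consists of exactly those continuous linear $T$ that carry every bounded sequence in $\Psi(E)$ to a bounded sequence lying in $\Phi(F)$ itself, upgrading ``some subsequence in $\Phi(F)$'' to ``the whole image sequence in $\Phi(F)$.'' Combining this characterization with the identification from the previous paragraph gives precisely the assertion: $\sO(\sM^\Phi/\sM^\Psi)$ is exactly the family of operators sending $\Psi$-sequences to $\Phi$-sequences.

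The only place where care is needed — and where I expect the sole possibility of slippage — is in verifying that the two hypotheses line up with the two theorems: Theorem~\ref{thm:quotient} consumes the normality of $\Psi$, while Theorem~\ref{thm:coordinated-quotient} consumes the coordination $\Phi\ll\Psi$. Since both hypotheses are explicitly present in the statement, there is no genuine obstacle, and the entire mathematical content is already carried by the two earlier results; the Corollary is a formal combination of them.
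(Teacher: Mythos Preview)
Your proposal is correct and matches the paper's intent: the corollary is stated without proof in the paper, being an immediate consequence of Theorems~\ref{thm:quotient} and~\ref{thm:coordinated-quotient} exactly as you describe. Your reading of $\sO(\sM^\Phi,\sM^\Psi)$ as the quotient ideal $\sO(\sM^\Phi/\sM^\Psi)$ is also the right interpretation of what is evidently a notational slip.
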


\section{Examples}

We begin with some useful generating bornologies.

\begin{examples}
A set $W$ in a Banach space $F$ is called
\begin{enumerate}
    \item \emph{nuclear} (see, e.g., \cite{Step80}) if
    $$
    W \subseteq \{ y\in F: y = \sum_{n=1}^\infty \lambda_n y_n, |\lambda_n| \leq 1\}
    $$
    for an absolutely summable sequence $\{y_n\}$ in $F$;

    \item \emph{unconditionally summable} (see, e.g., \cite{Step80}) if
    $$
    W \subseteq \{ y\in F: y = \sum_{n=1}^\infty \lambda_n y_n \text{ in norm}, |\lambda_n| \leq 1\}
    $$
    for an unconditionally  summable series $\sum_{n=1}^\infty y_n$ in $F$;

    \item \emph{weakly unconditionally summable} (see, e.g., \cite{Step80}) if
    $$
    W \subseteq \{ y\in F: y = \sum_{n=1}^\infty \lambda_n y_n \text{ weakly}, |\lambda_n| \leq 1\}
    $$
    for a weakly unconditionally  summable series $\sum_{n=1}^\infty y_n$ in $F$;

    \item \emph{limited} (\cite{BourgainDiestel84}) if
    $$
    \lim_{n\to\infty} \sup_{a\in W} |y_n'(a)| = 0
    $$
    for any $\sigma(F',F)$-null sequence $\{y_n'\}$ in $F'$.
\end{enumerate}
The above defines generating bornologies $\sM_\nu$, $\sM_{uc}$, $\sM_{wuc}$, and $\sM_{\text{lim}}$, respectively.
\end{examples}

Another way to obtain generating bornologies is through  generating
sequential structures.

\begin{examples}
A bounded sequence $\{x_n\}$ in a Banach space $F$ is called
    \begin{enumerate}
        \item \emph{$\delta_0$-fundamental} (\cite{Step80}) if
        $$
        t_n = x_{n+1} - x_n, \quad n=1,2,\ldots,
        $$
        forms a weakly unconditionally summable series;

        \item \emph{limited} (\cite{BourgainDiestel84}) if
        $$
        \langle x_n, x_n'\rangle \to 0 \text{ as } n\to\infty
        $$
        for any $\sigma(F', F)$-null sequence $\{x_n'\}$ in $F'$;

        \item \emph{Banach-Saks} if
        $$
        \lim_{n\to\infty} \frac{x_1 + x_2 + \cdots + x_n}{n} = x_0
        $$
        for
        some $x_0$ in $F$ in norm.
    \end{enumerate}
\end{examples}

\begin{examples}
The following are all generating sequential structures.
\begin{enumerate}
    \item $\Phi_{\delta_0}$ of all $\delta_0$-fundamental sequences.
    \item $\Phi_{\text{lim}}$ of all limited sequences.
    \item $\Phi_{\text{BS}}$ of all Banach-Saks sequences.
    \item $\Phi_c$ of all convergent sequences.
    \item $\Phi_{wc}$ of all weakly convergence sequences.
    \item $\Phi_{wCa}$ of all weakly Cauchy sequences.
\end{enumerate}
\end{examples}

\begin{examples}[\cite{Step80}]
The following generating bornologies can be induced by the corresponding generating
sequential structures as in Lemma \ref{lem:seq2born}.
\begin{enumerate}
    \item The $\delta_0$-compact bornology $\sM_{\delta_0}$ is defined by $\Phi_{\delta_0}$.
    \item The compact bornology $\sM_c$ is defined  by $\Phi_c$.
    \item The weakly compact bornology $\sM_{wc}$ is defined  by $\Phi_{wc}$.
    \item The Rosenthal compact bornology $\sM_{wCa}$ is defined  by $\Phi_{wCa}$.
    \item The Banach-Saks bornology $\sM_{\text{BS}}$  is defined by $\Phi_{\text{BS}}$.
\end{enumerate}
\end{examples}

\begin{proposition}
The generating bornology induced by $\Phi_{\rm{lim}}$ is $\sM_{\rm{lim}}$.
\end{proposition}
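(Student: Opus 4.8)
The plan is to prove the equality $\sM^{\Phi_{\rm{lim}}} = \sM_{\rm{lim}}$ by first collapsing the bornology $\sM^{\Phi_{\rm{lim}}}$ onto its base family via normality, and then matching that base family with $\sM_{\rm{lim}}$ by playing the sequential definition of \emph{limited} against the set-theoretic one. First I would check that $\Phi_{\rm{lim}}$ is normal: if $\{x_n\}$ is a limited sequence and $(\lambda_n)\in\ell^\infty$ with $\|(\lambda_n)\|_\infty\leq 1$, then for every $\sigma(F',F)$-null sequence $\{x_n'\}$ we have $|\langle \lambda_n x_n, x_n'\rangle| = |\lambda_n|\,|\langle x_n, x_n'\rangle| \leq |\langle x_n, x_n'\rangle| \to 0$, so $\{\lambda_n x_n\}$ is again limited. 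By Lemma \ref{lem:base} this yields $\sM^{\Phi_{\rm{lim}}}_{\rm{base}} = \sM^{\Phi_{\rm{lim}}}$, so it suffices to show that a bounded set $W$ in $F$ lies in $\sM^{\Phi_{\rm{lim}}}_{\rm{base}}(F)$ if and only if $W\in\sM_{\rm{lim}}(F)$.

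For the inclusion $\sM_{\rm{lim}}\subseteq\sM^{\Phi_{\rm{lim}}}$ I would prove the stronger fact that inside a limited set \emph{every} sequence is already a limited sequence. If $W$ is a limited set and $\{x_n\}$ is any sequence in $W$, then for each $\sigma(F',F)$-null sequence $\{x_n'\}$ one has $|\langle x_n, x_n'\rangle| \leq \sup_{a\in W}|x_n'(a)| \to 0$ by the definition of a limited set; hence $\{x_n\}\in\Phi_{\rm{lim}}(F)$, and in particular every sequence in $W$ has a limited subsequence, so $W\in\sM^{\Phi_{\rm{lim}}}_{\rm{base}}(F)$.

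For the converse $\sM^{\Phi_{\rm{lim}}}\subseteq\sM_{\rm{lim}}$ I would argue by contraposition. Suppose $W$ is bounded but not limited. Then there is a $\sigma(F',F)$-null sequence $\{y_n'\}$ and an $\varepsilon>0$ with $\sup_{a\in W}|y_n'(a)| > \varepsilon$ for infinitely many $n$; passing to a subsequence (which is still $\sigma(F',F)$-null) I may assume this holds for all $n$, and I then choose $a_n\in W$ with $|y_n'(a_n)| > \varepsilon$. No subsequence $\{a_{n_k}\}$ of $\{a_n\}$ can be a limited sequence, since the functionals $\{y_{n_k}'\}$ still form a $\sigma(F',F)$-null sequence while $|\langle a_{n_k}, y_{n_k}'\rangle| > \varepsilon$ for all $k$, contradicting the definition of a limited sequence. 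Thus $\{a_n\}$ is a sequence in $W$ with no limited subsequence, so $W\notin\sM^{\Phi_{\rm{lim}}}_{\rm{base}}(F)$.

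The main obstacle is precisely this last step: the contradiction hinges on a diagonal pairing in which the index of the witnessing point $a_n$ is synchronized with the index of the functional $y_n'$, so that any subsequence of the points automatically selects the matching subsequence of functionals and keeps the pairing $|\langle a_{n_k}, y_{n_k}'\rangle|$ bounded below by $\varepsilon$. Keeping these indices locked together is what makes the failure of limitedness of the set translate into the failure of the sequential condition; everything else is a direct reading of the two definitions.
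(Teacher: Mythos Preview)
Your proof is correct and follows essentially the same route as the paper's: establish normality of $\Phi_{\rm lim}$ to reduce to the base family via Lemma~\ref{lem:base}, then match $\sM^{\Phi_{\rm lim}}_{\rm base}$ with $\sM_{\rm lim}$ using the diagonal choice of $a_n\in W$ against a witnessing weak$^*$-null sequence $\{y_n'\}$. Your write-up is in fact more careful than the paper's sketch (which has a typo in the final displayed condition), particularly in spelling out why the synchronization of indices forces every subsequence $\{a_{n_k}\}$ to fail the limited-sequence test.
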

\begin{proof}
First, we notice that $\Phi_{\text{lim}}$ is normal.  Hence,
$\sM_{\text{base}}^{\Phi_{\text{lim}}} = \sM^{\Phi_{\text{lim}}}$ by
Lemma \ref{lem:base}.  Let $W\in \sM_{\text{lim}}(F)$.  Then,
by definition,
$$
\lim_{n\to\infty} \sup_{a\in W} |\langle a,y_n'\rangle| = 0
$$
for any $\sigma(F',F)$-null sequence $\{y_n'\}$ in $F'$ and any
sequence $\{y_n\}$ in $W$.  Hence, $W\in \sM^{\Phi_{\text{lim}}}(F)$.

Conversely, if $W$ is a bounded set in $F$ such that every sequence
$\{y_n\}$ in $W$ has a limited subsequence $\{y_{n_k}\}$, we need
to check that $W\in \sM_{\text{lim}}$.  Assume, on contrary, that there
were some $\sigma(F',F)$-null sequence $\{y_n'\}$ in $F'$ and some sequence $\{a_n\}$ in $W$
such that
$$
\lim_{n\to\infty} |\langle a_{n_k}, y_{n_k}'\rangle| = 0.
$$
This is a contradiction and thus $W$ is a member of $\sM_{\text{lim}}(F)$.
\end{proof}

\begin{proposition}
The generating bornologies $\sM_c$, $\sM_{wc}$, $\sM_{wCa}$, $\sM_\nu$,
$\sM_{uc}$, $\sM_{wuc}$ and $\sM_{\rm{lim}}$ are all $\sigma$-disked.
\end{proposition}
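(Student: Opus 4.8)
The plan is to read the assertion ``$\sM$ is $\sigma$-disked'' as the statement that the $\sigma$-disked hull $\G_\s W$ of every $\sM$-bounded set $W$ is again $\sM$-bounded, so that each of these bornologies has a basis of $\sigma$-disked (equivalently, by the opening Lemma, infracomplete) sets. The unifying observation I would record first is that for any bounded set $W$ in a Banach space $F$ one always has
$$
\G_\s W \subseteq \overline{\G W},
$$
the norm-closed absolutely convex hull: each element $\sum_n\lambda_n b_n$ of $\G_\s W$ (with $\sum_n|\lambda_n|\le 1$ and $b_n\in W$) is the norm limit of its partial sums, and each partial sum lies in $\G W$ precisely because $\sum_n|\lambda_n|\le 1$. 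Thus in every case it suffices to bound either $\G_\s W$ directly or the closed absolutely convex hull $\overline{\G W}$.

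For the summable-type bornologies $\sM_\nu$, $\sM_{uc}$ and $\sM_{wuc}$ I would show that the \emph{generating} sets are themselves $\sigma$-disked, whence the claim is immediate, since $\G_\s W\subseteq\G_\s B=B$ for the relevant generating set $B\supseteq W$. Concretely, for a set $B=\{\sum_n\lambda_n y_n:|\lambda_n|\le 1\}$ attached to an (absolutely, unconditionally, or weakly unconditionally) summable series $\sum_n y_n$, a $\sigma$-convex combination $\sum_k\mu_k b_k$ of points $b_k=\sum_n\lambda_n^{(k)}y_n\in B$ rearranges as $\sum_n\nu_n y_n$ with $\nu_n=\sum_k\mu_k\lambda_n^{(k)}$ and $|\nu_n|\le\sum_k|\mu_k|\le 1$, so it lies again in $B$. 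The only delicate point is the interchange of the two summations, which is licensed by the (absolute, resp.\ unconditional, resp.\ weak) convergence of $\sum_n y_n$ together with the uniform bound $|\nu_n|\le 1$.

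For $\sM_{\rm lim}$ the argument is direct: $W$ is limited iff $\sup_{a\in W}|\langle a,y_n'\rangle|\to 0$ for every $\s(F',F)$-null sequence $\{y_n'\}$, and for each fixed functional $\sup_{a\in\G_\s W}|\langle a,y_n'\rangle|=\sup_{a\in W}|\langle a,y_n'\rangle|$, so the defining condition passes verbatim to $\G_\s W$. For $\sM_c$ and $\sM_{wc}$ I would invoke the classical hull theorems: by Mazur's theorem the closed absolutely convex hull of a norm-relatively compact set is norm-compact, and by the Krein--\v{S}mulian theorem the closed absolutely convex hull of a relatively weakly compact set is relatively weakly compact; since $\G_\s W\subseteq\overline{\G W}$ and subsets inherit (weak) relative compactness, the hull stays in the respective bornology.

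The remaining case, $\sM_{wCa}$ (the Rosenthal bornology of sets all of whose sequences admit weakly Cauchy subsequences), is where I expect the real work. Here the analogue I need is the hull theorem for \emph{weakly precompact} sets, namely that the closed absolutely convex hull of a conditionally weakly compact set is again conditionally weakly compact. This does not follow from a soft compactness argument as in the Mazur/Krein cases; it rests on Rosenthal's $\ell^1$-theorem, and I would either cite the known stability of weak precompactness under closed convex hulls or reduce it to the statement that a set whose sequences contain no copy of the $\ell^1$-basis retains that property for its closed absolutely convex hull. Granting that theorem, the inclusion $\G_\s W\subseteq\overline{\G W}$ closes the case exactly as before. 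I would therefore dispose of the other six bornologies first and flag this Rosenthal-compact case as the main obstacle.
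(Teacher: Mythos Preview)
Your proposal is correct in outline, but it takes a very different route from the paper. The paper's proof is purely referential: it defers the case of $\sM_{\rm lim}$ to Bourgain--Diestel and all remaining cases to Stephani, with no argument given. By contrast, you actually sketch the underlying arguments case by case: the $\sigma$-disked stability of the generating ``summable'' sets for $\sM_\nu$, $\sM_{uc}$, $\sM_{wuc}$; the direct functional computation for $\sM_{\rm lim}$; Mazur and Krein--\v{S}mulian for $\sM_c$ and $\sM_{wc}$; and the Rosenthal $\ell^1$-theorem/stability of weak precompactness for $\sM_{wCa}$. Your write-up thus supplies what the paper only cites, at the cost of needing to justify the interchange of summation in the unconditional and weakly unconditional cases (you flag this but do not carry it out; it does go through via the bounded-multiplier property of unconditionally convergent series and, in the weak case, by testing against each functional). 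Your identification of the $\sM_{wCa}$ case as the genuinely nontrivial one is accurate: the convex-hull stability of conditionally weakly compact sets is exactly the substantive input here, and it is not a soft consequence of the other cases.
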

\begin{proof}
For the case of $\sM_{\text{lim}}$, see \cite{BourgainDiestel84}.
For all others, see \cite{Step80}.
\end{proof}

We are now ready to give a number of examples.

\begin{examples}
Using Definition \ref{defn:vertices} and Theorem \ref{thm:seq2OI}, we can
obtain the following operator ideals on Banach spaces through the
associated generating bornologies.
\begin{enumerate}
    \item The ideal \emph{$\gN^\sur$ of co-nuclear operators} for the nuclear
    bornology $\sM_\nu$.
    \item The ideal \emph{$\gL_{\text{lim}}$ of limited operators} for the limited bornology
    $\sM_{\rm{lim}}$.
    \item The ideal \emph{$\mathfrak{BS}$ of Banach-Saks operators} for the Banach-Saks bornology
    $\sM_{\text{BS}}$.
    \item The ideal \emph{$\gK$ of compact operators} for the compact bornology $\sM_c$.
    \item The ideal \emph{$\gW$ of weakly compact operators} for the weakly compact bornology $\sM_{wc}$.
    \item The ideal \emph{$\mathfrak R$ of Rosenthal compact operators} for the Rosenthal compact bornology $\sM_{wCa}$.
    \item The ideal \emph{$\mathfrak{D}_0$ of $\delta_0$-compact operators} for the $\delta_0$-compact
    bornology $\sM_{\delta_0}$.
\end{enumerate}
For a proof of (a), see \cite[p.~112]{Pie80}.  For (b), see \cite{Drewnowski86}.
For (c), see Theorem \ref{thm:seq2OI}.  For others, see \cite{Step80}.
\end{examples}

Concerning Theorem \ref{thm:coordinated-quotient}, we have

\begin{examples}
\begin{enumerate}
    \item $\Phi_c \ll \Phi_{wc}$.
    \item $\Phi_c \ll \Phi_{wCa}$.
    \item $\Phi_{wc} \ll \Phi_{wCa}$.
    \item $\Phi_c \ll \Phi_{\delta_0}$.
    \item $\Phi_c \ll \Phi_{\text{lim}}$.
\end{enumerate}
The first four can be found in \cite{Step80}.
For the last one, we observe the fact that each limited sequence is weakly Cauchy (see \cite{BourgainDiestel84})
and (c).
\end{examples}

\begin{examples}
The following operator ideals have  desirable representations.
\begin{enumerate}
    \item The ideal $\mathfrak U$ of unconditionally summing operators
    \begin{align*}
    {\mathfrak U} &= \sO(\sM_{uc}/\sM_{wuc}) = \sO(\sM_{uc})\circ\sO(\sM_{wuc})^{-1}\\
        &= \sO(\sM_c/\sM_{\delta_0}) = \sO(\sM_c)\circ\sO(\sM_{\delta_0})^{-1} = \gK\circ{\mathfrak D}_0^{-1}.
    \end{align*}

    \item The ideal $\mathfrak V$ of completely continuous operators
    \begin{align*}
    {\mathfrak V} &= \sO(\Phi_c/\Phi_{wc}) = \sO(\sM_c/\sM_{wc}) = \sO(\sM_c)\circ\sO(\sM_{wc})^{-1} = \gK\circ\gW^{-1}\\
        &= \sO(\Phi_c/\Phi_{wCa}) = \sO(\sM_c/\sM_{wCa}) = \sO(\sM_c)\circ\sO(\sM_{wCa})^{-1}=\gK\circ
        {\mathfrak R}^{-1}.
    \end{align*}

    \item The ideal $w{\mathfrak{SC}}$ of weakly sequentially complete operators
    $$
    s{\mathfrak{SC}} = \sO(\Phi_{wc}/\Phi_{wCa}) = \sO(\sM_{wc}/\sM_{wCa}) =
        \sO(\sM_{wc})\circ\sO(\sM_{wCa})^{-1} = \gW \circ {\mathfrak R}^{-1}.
    $$

    \item The ideal $\mathfrak{GP}$ of Gelfand-Phillips operators
    $$
    {\mathfrak{GP}} = \sO(\Phi_c/\Phi_{\text{lim}})
                 = \sO(\sM_c/\sM_{\text{lim}}) = \sO(\sM_c)\circ\sO(\sM_{\text{lim}})^{-1}
                 = \gK\circ {\mathfrak L}_{\text{lim}}^{-1}.
    $$

    \item The ideal $w{\mathfrak{BS}}$ of weakly Banach-Saks operators
    $$
    w{\mathfrak{BS}} = \sO(\Phi_{\text{BS}}/\Phi_{wc})
                    = \sO(\sM_{\text{BS}}/\sM_{wc}) =  \sO(\sM_{\text{BS}})\circ\sO(\sM_{wc})^{-1}
                     = {\mathfrak{BS}}\circ\gW^{-1}.
    $$

\end{enumerate}
See \cite{Step80} for a proof of (a), (b) and (c).  See \cite{Drewnowski86} for (d).
For (e), we simply recall that a $T$ in $\gL(E,F)$ is called \emph{weakly Banach-Saks} if $T$
sends each weakly convergent sequence to a sequence possessing a Banach-Saks subsequence, by definition.
\end{examples}

\begin{proposition}[\cite{Step80}]
Let $\gA$ be a surjective operator ideal such that $\gA$ is idempotent, i.e., $\gA^2=\gA$.  Then
for any operator ideal $\gB\supseteq \gA$, there exists an operator
ideal $\gC$ such that
$$
\gA = \gC\circ \gB^\sur,
$$
and $\gC$ can be chosen to be $\gA\circ(\gB^\sur)^{-1}$.
\end{proposition}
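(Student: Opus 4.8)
The plan is to verify the two inclusions making up $\gA=\gC\circ\gB^\sur$ for the explicit candidate $\gC=\gA\circ(\gB^\sur)^{-1}$, after recording two elementary facts. First I would observe that the ideal axiom (OI$_2$) already gives $\gA\circ\gL\subseteq\gA$: whenever $S\in\gA$ and $U\in\gL$ are composable, $SU$ is obtained from the ideal member $S$ by pre-composition with a continuous operator, so $SU\in\gA$. Since $\gB^\sur\subseteq\gL$, this says that every $S\in\gA$ satisfies $SU\in\gA$ for all $U\in\gB^\sur$, which is exactly the condition defining $\gC=\gA\circ(\gB^\sur)^{-1}$; hence $\gA\subseteq\gC$. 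Second, the hypothesis $\gA\subseteq\gB$ together with $\gB\subseteq\gB^\sur$ yields $\gA\subseteq\gB^\sur$.

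With these in hand, the inclusion $\gC\circ\gB^\sur\subseteq\gA$ is immediate from the meaning of the quotient: an element of $\gC\circ\gB^\sur$ is a product $ST$ with $S\in\gC$ and $T\in\gB^\sur$, and membership of $S$ in $\gA\circ(\gB^\sur)^{-1}$ means precisely that $ST'\in\gA$ for every $T'\in\gB^\sur$; specializing to $T'=T$ gives $ST\in\gA$. This half uses only the definition of $\gC$ and the fact that $\gB^\sur$ consists of continuous operators.

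The reverse inclusion $\gA\subseteq\gC\circ\gB^\sur$ is the crux, and the place where idempotency is indispensable. Here I would use $\gA^2=\gA$ in the form $\gA\subseteq\gA\circ\gA$ (note that $\gA^2\subseteq\gA$ is automatic from (OI$_2$), so the content of idempotency lies entirely in the opposite containment). Given $R\in\gA$, idempotency produces a factorization $R=ST$ with $S,T\in\gA$; by the first preliminary fact $S\in\gA\subseteq\gC$, and by the second $T\in\gA\subseteq\gB^\sur$, so $R=ST\in\gC\circ\gB^\sur$. The two inclusions together give $\gA=\gC\circ\gB^\sur$ with the asserted choice of $\gC$. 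Finally I would note that $\gC$ is a genuine operator ideal because a quotient of operator ideals is again one, its nonvanishing being ensured by $\gA\subseteq\gC$ and (OI$_1$). I expect the only real subtlety to be the bookkeeping in the reverse inclusion; the surjectivity of $\gA$, by contrast, is not needed directly by this argument, the driving hypotheses being idempotency and $\gA\subseteq\gB$.
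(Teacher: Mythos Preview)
Your argument is correct. The forward inclusion $\gA\subseteq\gC\circ\gB^\sur$ is handled exactly as in the paper: both of you use $\gA\subseteq\gC$ (from the ideal axiom) together with $\gA\subseteq\gB\subseteq\gB^\sur$ and idempotency to write $\gA=\gA^2\subseteq\gC\circ\gB^\sur$. The genuine difference is in the reverse inclusion $\gC\circ\gB^\sur\subseteq\gA$. The paper routes this through its bornological machinery: it identifies $\gC$ with $\sO(\sM_\gA/\sM_\gB)=L^\times((\cdot)^{\sM_\gB},(\cdot)^{\sM_\gA})\cap\gL$ and $\gB^\sur$ with $\sO(\sM_\gB)=L^\times(\cdot,(\cdot)^{\sM_\gB})\cap\gL$, so that a composite $RS$ lands in $L^\times(\cdot,(\cdot)^{\sM_\gA})\cap\gL=\sO(\sM_\gA)=\gA^\sur$, and the hypothesis that $\gA$ is surjective is precisely what closes the loop to $\gA$. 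You instead read the inclusion straight off the definition of the quotient $\gA\circ(\gB^\sur)^{-1}$, which is shorter and, as you observe, does not invoke the surjectivity of $\gA$ at all. What the paper's route buys is that it illustrates the bornological calculus that is the theme of the article; what yours buys is an elementary, self-contained argument valid for any idempotent ideal $\gA\subseteq\gB$, surjective or not.
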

\begin{proof}
We present a proof here for completeness.
Denote
$$
\sM_\gA := \sM(\gA), \quad \sM_\gB :=\sM(\gB)
$$
and set
$$
\gC := \sO(\sM_\gA/\sM_\gB).
$$
Now the facts $\gA\subseteq \gB$ and
$$
\gA\subseteq \gC =\sO(\sM_\gA)\circ\sO(\sM_\gB)^{-1} = \gA\circ(\gB^\sur)^{-1}
$$
implies that
$$
\gA = \gA^2 \subseteq \gC\circ\gB \subseteq \gC\circ\gB^\sur.
$$
On the other hand, if $T\in \gC\circ\gB^\sur(E,F)$ then $T=RS$ for
some $R\in \gC(G,F)= L^\times(G^{\sM_\gB},F^{\sM_\gA})\cap\gL(G,F)$ and $S\in\gB^\sur(E,G)=
L^\times(E,G^{\sM_\gB})\cap\gL(E,G)$ with some Banach space $G$.
Hence,
$$
T\in L^\times(E,F^{\sM_\gA})\cap\gL(E,F) = \gA(E,F).
$$
\end{proof}

\begin{examples}[\cite{Step80}]
\begin{enumerate}
    \item Since the surjective ideal $\gK$ of compact operators is idempotent and
    contained in the surjective ideal $\gW$, $\mathfrak R$, and ${\mathfrak D}_0$ of
    weakly compact operators, Rosenthal compact operators, and $\delta_0$-compact
    operators, respectively, we have
    \begin{enumerate}
        \item $\gK = {\mathfrak V}\circ \gW$ since ${\mathfrak V} = \sO(\sM_c/\sM_{wc})
        =\gK\circ\gW^{-1}$;
        \item $\gK = {\mathfrak V}\circ {\mathfrak R}$ since ${\mathfrak V} = \sO(\sM_c/\sM_{wcA})
        = \gK\circ{\mathfrak R}^{-1}$;
        \item $\gK = {\mathfrak U}\circ{\mathfrak D}_0$ since
        ${\mathfrak U} = \sO(\sM_c/\sM_{\delta_0})= \gK\circ{{\mathfrak D}_0}^{-1}$.
    \end{enumerate}
    \item Since the ideal $\gW$ of weakly compact operators is idempotent and contained in the surjective
    ideal $\mathfrak R$ of Rosenthal compact operators, we have
    $$
    \gW= w{\mathfrak{SC}}\circ {\mathfrak R},
    $$
    where $w{\mathfrak{SC}}=\sO(\sM_{wc}/\sM_{wCa}) = \gW\circ{\mathfrak R}^{-1}$ is the ideal
    of weakly sequentially complete operators.
\end{enumerate}
\end{examples}

\end{document}